\newcommand\gX{\mathcal{X}}
\newcommand\gA{\tilde{\mathcal{A}}}
\newcommand\gB{\tilde{\mathcal{B}}}
\newcommand\gU{\mathcal{U}}
\newcommand\gV{\mathcal{V}}
\newcommand\gT{\mathcal{T}}
\newcommand\gF{\mathcal{F}}
\newcommand\gG{\mathcal{G}}
\newcommand\gE{\mathcal{E}}
\newcommand\glambda{\xi}
\newcommand\RE{\mathbb{R}}
\newcommand\CO{\mathbb{C}}
\newcommand\Tlam{\tilde{\kappa}}
\theoremstyle{plain}
\newtheorem{theorem}{Theorem}
\newtheorem{proposition}[theorem]{Proposition}
\theoremstyle{remark}
\newtheorem{property}{Property}
\newtheorem{remark}{Remark}
\renewcommand{\email}[2][]{%
  \ifx\emails\@empty\relax\else{\g@addto@macro\emails{,\space}}\fi%
  \@ifnotempty{#1}{\g@addto@macro\emails{\textrm{(#1)}\space}}%
  \g@addto@macro\emails{#2}%
}
\begin{document}
\title[]
{On the spectrum of an operator associated with least-squares finite elements
for linear elasticity}

\author{Linda Alzaben~$^{\ast}$}
\author{Fleurianne Bertrand~{$^\star$}}
\author{Daniele Boffi~$^\dagger$}
\address[$\ast$]{King Abdullah University of Science and Technology (KAUST), Saudi Arabia}
\address[$\star$]{Faculty of Electrical Engineering, Mathematics and Computer Science, University of Twente}
\address[$\dagger$]{King Abdullah University of Science and Technology (KAUST), Saudi Arabia and Universit\`a degli Studi di Pavia, Italy}
\email[$\ast$]{linda.alzaben@kaust.edu.sa}
\email[$\star$]{f.bertrand@utwente.nl}
\email[$\dagger$]{daniele.boffi@kaust.edu.sa}

\begin{abstract}
In this paper we provide some more details on the numerical analysis and we
present some enlightening numerical results related to the spectrum of a
finite element least-squares approximation of the linear elasticity
formulation introduced recently. We show that, although the formulation is
robust in the incompressible limit for the source problem, its spectrum is
strongly dependent on the Lam\'e parameters and on the underlying mesh.
\end{abstract}
\maketitle
\noindent \textbf{Keywords.} Eigenvalue problem; linear elasticity;
least-squares finite elements.

\textbf{AMS subject classification.} 65N25, 74B05
\section{Introduction}

In this paper we continue the discussion started in~\cite{FB_eig,FB_LE} about
the spectrum of operators arising from least-squares finite element
approximation of partial differential equations.
In~\cite{FB_eig} several least-squares formulations associated with the Laplace
problem and in~\cite{FB_LE} two formulations associated with the linear
elasticity problem were considered. Here we continue the analysis of the
two-field formulation presented in~\cite{FB_LE}. The two-field formulation was
introduced in~\cite{Cai} for the approximation of the source problem and has
the merit of providing a robust discretization also when the system approaches
the incompressible limit.

Several methods are available for the computation of the eigenvalues and
eigefunctions in linear elasticity and the aim of this paper is not to compete
with them. However, a good knowledge of the properties of the spectrum is
useful for several applications. For instance, if a transient problem is
approximated by using the two-field approach for the space
semi-discretization, then the behavior of the solution depends essentially on
the discrete eigenvalues of the operator corresponding to the least-squares
model. Hence, we believe that a study of the approximation of the eigenvalues
and the eigenfunctions may be interesting in view of a better understanding of
the scheme under investigation.

The aim of this paper is two-fold. First, we complete the analysis presented
in~\cite{FB_LE} where only a sketch of the main ideas were indicated.
Actually, since the least-squares formulation is not symmetric, the
convergence analysis should take into account the dual problem for which we
provide a careful description in this paper.
Second, we present a bunch of numerical experiments that highlight some
properties of the spectrum. A peculiarity of our operators is that the
continuous problem has positive and real eigenvalues, while its approximation
may have eigenvalues everywhere in the complex plane.
Our numerical tests show that for small values of the Lam\'e constant
$\lambda$ (i.e., when the considered elastic solid is far from being
incompressible) the discrete spectrum is generally \emph{well-behaving} that
is, it is distributed in the right half of the complex plane and has a small
imaginary part. On the other hand, when the solid tends to the incompressible
limit, as $\lambda$ increases, the distribution of the discrete spectrum is
more spread in the entire complex plane, including eigenvalues with negative
real part. We present several examples of this behavior and discuss how it may
depend on the chosen mesh sequence. We conclude that, although the two-field
formulation has been proved to be robust for the approximation of the source
problem corresponding to linear elasticity, the spectrum of the discrete
solution operator is \emph{more stable} when a small value of $\lambda$ is
considered.

It will be interesting in the future to discuss in more detail the asymptotic
exactness of the least-squares operator associated to linear elasticity proved
in~\cite{cc}, and to see how it is related to eigenvalue problems and to the
situation presented in this paper.

In Section~\ref{sc:Contiuouse_porblem} we describe the first order system
corresponding to linear elasticity and recall its two-field least-squares
representation. Section~\ref{se:gal} deals with its finite element
discretization and Section~\ref{se:convergence analysis} develops the
convergence analysis. Finally, we report our numerical results in
Section~\ref{se:num}.

\section{The continuous problem}
\label{sc:Contiuouse_porblem}
Consider a polytopal domain $\Omega$ in $\mathbb{R}^d$ $(d = 2 , 3)$. 
We partition the boundary of the domain into two open subsets 
$\Gamma_D$ and $\Gamma_{N}$ such that 
$\partial\Omega = \overline{\Gamma}_{D}\cup \overline{\Gamma}_{N}$ and 
$\Gamma_{D}\cap \Gamma_{N} = \emptyset$ with $\mathbf{n}$ 
denoting the outward unit vector normal to $\Gamma_{N}$. 
The linear elasticity problem that we are going to consider consists of
finding a stress 
tensor $\underline{\bm{\sigma}} = (\sigma_{i,j})_{d \times d}$
and a displacement field $\mathbf{u} =(u_1,\hdots,u_d)^\top$ for a given 
body force $\mathbf{f} = (f_1, \hdots, f_d)^\top$ that satisfy the system 
\begin{equation}\label{eq:elasticity_First_order_system}
\begin{cases}
\underline{\bm{\sigma}} - 
\mathcal{C}\underline{\bm{\epsilon}}(\mathbf{u}) 
=
\underline{\bf{0}} &\quad \text{in} \quad \Omega, \\
\bf{div}\underline{\bm{\sigma}} =
- \mathbf{f}
&\quad \text{in} \quad \Omega, \\
\mathbf{u} = 
\bf{0} \quad &\quad \text{on} \quad \Gamma_{D}, \\
\mathbf{n} \cdot \underline{\bm{\sigma}} = 
\bf{0} &\quad \text{on} \quad{\Gamma_{N}},
\end{cases}
\end{equation}
where $\underline{\bm{\epsilon}}(\mathbf{v})$ is
the symmetric gradient (also known as the strain tensor) given by
$$\underline{\bm{\epsilon}}({\mathbf{v}}) = 
\frac{1}{2}(\underline{\bm\nabla}\mathbf{v} + 
(\underline{\bm\nabla}\mathbf{v})^\top),$$
and $\mathcal{C}$ is the elasticity tensor (a symmetric operator) 
for isotropic, homogeneous material 
defined in terms of the Lam\'e constants 
$\mu$ and $\lambda$ as
$$ \mathcal{C} \underline{\bm{\epsilon}}({\mathbf{v}}) =
\lambda \text{tr}(\underline{\bm{\epsilon}}({\mathbf{v}}))\underline{\mathbf{I}} + 2\mu \underline{\bm{\epsilon}}({\mathbf{v}}),$$
with $\underline{\mathbf{I}}$ being the identity tensor.

It is well known that there is a relation between stress and strain 
tensors such that:
\begin{equation}\label{eq:starin_stress_relation}
\underline{\bm{\tau}} = 
\mathcal{C}\underline{\bm{\epsilon}}(\mathbf{v}) \quad
\text{or} \quad
\underline{\bm{\epsilon}}(\mathbf{v}) = 
\mathcal{A}\underline{\bm{\tau}},
\end{equation}
where $\mathcal{A}$ is the compliance tensor given by 
\begin{equation}\label{eq:compliance_tensor}
\mathcal{A}\underline{\bm{\tau}} =
\frac{1}{2\mu}\bigg(\underline{\bm{\tau}} -
\frac{\lambda}{d\lambda + 
2\mu}\text{tr}(\underline{\bm\tau})\underline{\mathbf{I}}\bigg).
\end{equation}
If $\lambda$ is finite, then from \eqref{eq:starin_stress_relation} we have
$\mathcal{A} = \mathcal{C}^{-1}$.
However, as $\lambda$ approaches $\infty$ the elasticity tensor 
blows up and the material becomes nearly incompressible or incompressible.
For this reason it may be more convenient to use the relation 
$\underline{\bm{\epsilon}}(\mathbf{v}) = \mathcal{A}{\underline{\bm{\tau}}}$
if we are interested in formulations that are stable uniformly in $\lambda$.
We can then rewrite System~\eqref{eq:elasticity_First_order_system} as:
find a symmetric $d\times d$ stress tensor
$\underline{\bm\sigma}$ and a displacement vector-field $\mathbf{u}$ such that
\begin{equation}\label{eq:compliance_First_order_system}
\begin{cases}
\mathcal{A}\underline{\bm{\sigma}} - 
\underline{\bm{\epsilon}}(\mathbf{u}) =
\underline{\bm{0}} &\quad \text{in} \quad \Omega, \\
\bf{div}\underline{\bm{\sigma}} =
- \mathbf{f} 
&\quad \text{in} \quad \Omega, \\
\mathbf{u} = 
\bm{0} \quad &\quad \text{on} \quad \Gamma_{D}, \\
\mathbf{n} \cdot \underline{\bm{\sigma}} = 
\bm{0} &\quad \text{on} \quad{\Gamma_{N}}.
\end{cases}
\end{equation} 
%

We are interested in the spectrum of the solution operator associated
with~\eqref{eq:compliance_First_order_system}, hence, we replace the source
term $\mathbf{f}$ by $\omega \mathbf{u}$, where $\omega$ is the eigenvalue
associated with $\mathbf{u}$.
Since our problem is symmetric, we are looking for real eigenvalues
$\omega \in \mathbb{R}$ such that for non vanishing $\mathbf{u}$ and some
$\underline{\bm{\sigma}}$ the following set of equations is satisfied
\begin{equation}\label{eq:eigenvalue_compliance_First_order_system}
\begin{cases}
\mathcal{A}\underline{\bm{\sigma}} - 
\underline{\bm{\epsilon}}(\mathbf{u}) =
\underline{\bm{0}} &\quad \text{in} \quad \Omega, \\
\bf{div}\underline{\bm{\sigma}} = 
- \omega \mathbf{u} 
&\quad \text{in} \quad \Omega, \\
\mathbf{u} = 
\bm{0} \quad &\quad \text{on} \quad \Gamma_{D}, \\
\mathbf{n} \cdot \underline{\bm{\sigma}} = 
\bm{0} &\quad \text{on} \quad{\Gamma_{N}}.
\end{cases}
\end{equation} 
The eigenvalue Problem~\eqref{eq:eigenvalue_compliance_First_order_system} is compact due 
to the regularity properties of the solution of System \eqref{eq:compliance_First_order_system}.
Therefore, the eigenvalues are real and positive and form an increasing sequence 
$$0 < \omega_1\leq \omega_2\leq\cdots\leq\omega_i\leq\cdots  
\quad \text{with}\quad  
\lim_{i\to\infty}\omega_i = \infty,$$
repeated according to their multiplicity so that each eigenvalue $\omega_i$ corresponds 
to a one dimensional eigenspace  $E_i$.

We rewrite~\eqref{eq:compliance_First_order_system} following
the least-squares principle approach introduced in~\cite{Cai}.
As stated by the authors and as a consequence of the above considerations,
this approach has the important advantage of automatically stabilizing the 
stress-dis\-place\-ment system in the incompressible limit when comparing it to 
other approaches, thus being more robust.
The so called two-field formulation (in the unknowns $\mathbf{u}$ and $\underline{\bm{\sigma}}$) 
was achieved by applying the $L^{2}$ norm and minimizing the functional 
\begin{equation}\label{eq:functional}
\mathcal{F}(\underline{\bm\tau},\textbf{v};\textbf{f})
= 
||\mathcal{A}\underline{\bm{\tau}} 
- 
\underline{\bm{\epsilon}}(\textbf{v})||_{0}^{2} +
||\textbf{div}\underline{\bm{\tau}} + \textbf{f} ||_{0}^{2},
\end{equation}
in $\underline{\bm{X}}_N \times H^1_{0,D}(\Omega)^d$, where
\begin{equation*}
\underline{\bm{X}} =
\begin{cases}
\textbf{H}(\textbf{div};\Omega)^d &\quad \text{if} 
\quad\Gamma_N\neq \emptyset,\\
\{\underline{\bm{\tau}}\in\textbf{H}(\textbf{div};\Omega)^d:
\int_{\Omega}\text{tr}(\underline{\bm{\tau}})\text{d}\textbf{x} = 
0\} &\quad \text{if} \quad\Gamma_N= \emptyset,\\
\end{cases}
\end{equation*}
and $\underline{\bm{X}}_N$ being its subspace
\begin{equation*}
\underline{\bm{X}}_N =\{ \underline{\bm{\tau}} \in \underline{\bm{X}}: 
\textbf{n} \cdot \underline{\bm{\tau}} = \textbf{0}\quad \text{on}
\quad \Gamma_N\}.
\end{equation*}
Applying the same strategy
to~\eqref{eq:eigenvalue_compliance_First_order_system} would lead to a
non-linear problem.  Following the original idea developed in~\cite{FB_eig}
for the Laplace operator, in~\cite{FB_LE} the spectrum of the operators
associated with the least-squares $\it{source}$ formulation was considered.
\subsection{The variational formulation.}
The minimization of the functional corresponding to the source problem
in~\eqref{eq:functional}, gives rise to the variational problem: 
find $(\underline{\bm{\sigma}},\textbf{u}) 
\in \underline{\textbf{X}}_N\times H^1_{0,D}(\Omega)^d$ such that
\begin{equation}\label{eq:variationa_form_source}
\begin{cases}
(\mathbf{\mathcal{A}}\underline{\bm{\sigma}},
\mathbf{\mathcal{A}}\underline{\bm{\tau}})
+
(\textbf{div}\underline{\bm{\sigma}},\textbf{div}\underline{\bm{\tau}})
-
(\mathbf{\mathcal{A}}\underline{\bm{\tau}},\underline{\bm{\epsilon}}(\textbf{u}))
=
-(\textbf{f},\textbf{div}\underline{\bm{\tau}})
&\quad \forall \underline{\bm{\tau}} \in \underline{\bm{X}}_N, \\
-
(\mathbf{\mathcal{A}}\underline{\bm{\sigma}}, 
\underline{\bm{\epsilon}}(\textbf{v}))
+
(\underline{\bm{\epsilon}}(\textbf{u}),  
\underline{\bm{\epsilon}}(\textbf{v}))
=
\bm{0}
&\quad \forall\textbf{v} \in H^{1}_{0,D}(\Omega)^d,
\end{cases}
\end{equation}
so that the eigenvalue variational problem associated with the two-field
formulation reads:
find 
$(\omega,\mathbf{u}) \in \mathbb{R}\times H^{1}_{0,D}(\Omega)^d$  with $\textbf{u}\neq 0$ such that for some $\underline{\bm{\sigma}} \in \underline{\bm{X}}_N $ we have
\begin{equation}\label{eq:variationa_form_eig}
\begin{cases}
(\mathbf{\mathcal{A}}\underline{\bm{\sigma}},
\mathbf{\mathcal{A}}\underline{\bm{\tau}})
+
(\textbf{div}\underline{\bm{\sigma}},
\textbf{div}\underline{\bm{\tau}})
-
(\mathbf{\mathcal{A}}\underline{\bm{\tau}},
\underline{\bm{\epsilon}}(\textbf{u}))
=
-\omega (\textbf{u},\textbf{div}\underline{\bm{\tau}})
&\quad \forall \underline{\bm{\tau}} \in \underline{\bm{X}}_N, \\
-
(\mathbf{\mathcal{A}}\underline{\bm{\sigma}}, 
\underline{\bm{\epsilon}}(\textbf{v}))
+
(\underline{\bm{\epsilon}}(\textbf{u}), 
\underline{\bm{\epsilon}}(\textbf{v}))
=
\bm{0}
&\quad \forall\textbf{v} \in H^{1}_{0,D}(\Omega)^d.
\end{cases}
\end{equation} 

As discussed in~\cite{FB_LE,FB_eig}, problem~\eqref{eq:variationa_form_eig}
has the following non symmetric structure
\begin{equation}\label{eq:matrix_structure}
\begin{pmatrix} 
A & B^\top \\ 
B & C 
\end{pmatrix}
\begin{pmatrix} 
x \\ 
y
\end{pmatrix}
= \omega
\begin{pmatrix} 
0 & D\\ 0 & 0 
\end{pmatrix}
\begin{pmatrix} 
x \\ 
y
\end{pmatrix},
\end{equation}
where the operators are associated to the bilinear forms as follows
\begin{equation}\label{eq:the_forms}
\begin{cases}
A : (\mathbf{\mathcal{A}}\underline{\bm{\sigma}},
\mathbf{\mathcal{A}}\underline{\bm{\tau}})
+
(\textbf{div}\underline{\bm{\sigma}},
\textbf{div}\underline{\bm{\tau}}),\\
B : -(\mathbf{\mathcal{A}}\underline{\bm{\sigma}}, 
\underline{\bm{\epsilon}}(\textbf{v})),\\
C :(\underline{\bm{\epsilon}}(\textbf{u}),  
\underline{\bm{\epsilon}}(\textbf{v})),\\
D :-(\textbf{u},\textbf{div}\underline{\bm{\tau}}),
\end{cases}
\end{equation}
such that $x$ and $y$ are associated to $\underline{\bm{\sigma}}$ and $\textbf{u}$, 
respectively, in an abstract setting.
Compared to the Laplacian case in~\cite{FB_eig}, we can
see that the operators $B^\top$ and $-D$ are different from each other, being
related to the bilinear forms 
$-(\mathbf{\mathcal{A}}\underline{\boldsymbol{\tau}},
\underline{\boldsymbol{\epsilon}}(\textbf{u}))$ 
and 
$(\textbf{u},\textbf{div}\underline{\boldsymbol{\tau}})$. In particular, it is
not possible to show that~\eqref{eq:matrix_structure} corresponds to a
symmetric problem.
Thus, we might expect complex eigenvalues among the approximation of
the solutions of~\eqref{eq:variationa_form_eig} even if the eigenvalues of the
continuous problem are real.
This is different from the structure of the FOSLS of the Poisson equation
in~\cite{FB_eig} which corresponds to a symmetric problem when applying the
integration by part to the bilinear form associated to $-D$
(see~\cite[Sec.\ 2.1]{FB_eig}).
\section{Galerkin discretization}
\label{se:gal}

The discrete variational formulation associated with \eqref{eq:variationa_form_eig} is 
established by introducing finite dimensional subspaces 
$\Sigma_h \subset \underline{\bm{X}}_N$, $U_h\subset H^{1}_{0,D}(\Omega)^d$
and by considering discrete variables
$\underline{\bm{\sigma}}_h\subset\Sigma_h$ and $\textbf{u}_h\subset U_h$,
respectively.
Then the Galerkin approximation of the eigenvalue problem is: find 
$(\omega_h, \textbf{u}_h) \in \mathbb{C}\times U_h$ with a non vanishing 
$\textbf{u}_h$ and some $\underline{\bm{\sigma}}_h\in \Sigma_h$ such that
\begin{equation}\label{eq:disdrete_variationa_form_eig}
\begin{cases}
(\mathbf{\mathcal{A}}\underline{\bm{\sigma}}_h,\mathbf{\mathcal{A}}\underline{\bm{\tau}})
+
(\textbf{div}\underline{\bm{\sigma}}_h,\textbf{div}\underline{\bm{\tau}})
-
(\mathbf{\mathcal{A}}\underline{\bm{\tau}},\underline{\bm{\epsilon}}(\textbf{u}_h))
=
-\omega_h (\textbf{u}_h,\textbf{div}\underline{\bm{\tau}})
&\quad \forall \underline{\bm{\tau}} \in \Sigma_h,\\
-
(\mathbf{\mathcal{A}}\underline{\bm{\sigma}}_h, \underline{\bm{\epsilon}}(\textbf{v}))
+
(\underline{\bm{\epsilon}}(\textbf{u}_h), 
\underline{\bm{\epsilon}}(\textbf{v}))
=
\bm{0}
&\quad \forall\textbf{v} \in U_h.
\end{cases}
\end{equation} 
This discrete eigenvalue problem has the same structure as in 
\eqref{eq:matrix_structure} 
with the natural definition of matrices that are associated with 
the bilinear forms defined in \eqref{eq:the_forms}.
Looking at the problem from an algebraic point of view, the solution of this problem
satisfies some properties that are discussed in~\cite{FB_LE,FB_eig}.
In general, our discrete eigenvalue problem in \eqref{eq:disdrete_variationa_form_eig} has the form of a
generalized eigenvalue problem
\begin{equation}\label{eq:generalized_eigenvalue_prob}
\mathcal{M}x = \omega\mathcal{N}x,
\end{equation}
where in our framework the matrix $\mathcal{M}$ is symmetric and invertible,
while $\mathcal{N}$ is non-symmetric and singular.
The aim of this paper is not to discuss how to solve efficiently our problem.
We just indicate that a possible way to resolve the singularity of the system
is to switch the roles of the two matrices $\mathcal{M,N}$ and to consider the
solutions $(\gamma,x)$ of the problem 
$$\mathcal{N}x=\gamma\mathcal{M}x.$$
If $\gamma=0$ we will say that the corresponding eigenvalue
of~\eqref{eq:generalized_eigenvalue_prob} is $\omega=\infty$.
The remaining finite eigenmodes are $(\omega,x)$ where
$\omega= \frac{1}{\gamma}$. \\
We report the following proposition from~\cite{FB_LE} that classifies the eigenvalues 
of our problem.
\begin{proposition}\label{pro:eigval_families}
Consider the matrices associated with the bilinear forms defined in \eqref{eq:the_forms}.
Then the following generalized eigenvalue problem 
\begin{equation}\label{eq:discte_matrix_structure}
\begin{pmatrix} 
A & B^\top \\ 
B & C 
\end{pmatrix}
\begin{pmatrix} 
\hat{\bm{\sigma}}_h \\ 
\hat{\normalfont\textbf{u}}_h
\end{pmatrix}
= \omega_h
\begin{pmatrix} 
0 & D\\ 0 & 0 
\end{pmatrix}
\begin{pmatrix} 
\hat{\bm{\sigma}}_h \\ 
\hat{\normalfont\textbf{u}}_h
\end{pmatrix}
\end{equation}{}
has three families of eigenvalues, specifically:
\begin{itemize}
\item $\omega_h = \infty$ with multiplicity equal to $\text{dim}(\Sigma_h)$ 
\item $\omega_h = \infty$ with multiplicity equal to $\text{dim(ker(D))}$ if $D$ is not full rank
\item $\omega_h =\text{complex eigenvalues}$ counted with their multiplicity
equal to rank($D$)
\end{itemize}
\end{proposition}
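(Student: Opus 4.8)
The plan is to use the reciprocal formulation already set up above, studying the eigenvalues $\gamma=1/\omega$ of $\mathcal{N}x=\gamma\mathcal{M}x$ and tracking algebraic multiplicities. Write $n=\dim\Sigma_h$ and $m=\dim U_h$, so that $\mathcal{M},\mathcal{N}$ have size $n+m$. Since $\mathcal{M}$ is the matrix of the coercive least-squares bilinear form, it is symmetric positive definite, hence invertible, and the pencil $\mathcal{M}-\omega\mathcal{N}$ is regular: $\det(\mathcal{M}-\omega\mathcal{N})$ is a nonzero polynomial with $\det(\mathcal{M}-0\cdot\mathcal{N})=\det\mathcal{M}\neq0$. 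A regular pencil of size $n+m$ has exactly $n+m$ eigenvalues in the extended complex plane counted with algebraic multiplicity, the finite ones being the roots of that polynomial and the value $\omega=\infty$ (equivalently $\gamma=0$ for $T:=\mathcal{M}^{-1}\mathcal{N}$) absorbing the rest. The whole statement is thus a bookkeeping of how these $n+m$ eigenvalues split.

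I would first account for the infinite part. From $\mathcal{N}(\hat{\bm\sigma},\hat{\mathbf u})^\top=(D\hat{\mathbf u},\,0)^\top$ one gets $\ker\mathcal{N}=\ker T=\{(\hat{\bm\sigma},\hat{\mathbf u}):D\hat{\mathbf u}=0\}$, of dimension $n+\dim\ker D$. This space contains the $n$-dimensional subspace $\{(\hat{\bm\sigma},0)\}$, whose elements are infinite eigenvectors for any $D$ and which gives the first family of multiplicity $\dim\Sigma_h$; a complement inside $\ker\mathcal{N}$ is spanned by vectors with $\hat{\mathbf u}\in\ker D\setminus\{0\}$, has dimension $\dim\ker D$, and is nontrivial exactly when $D$ is rank deficient, giving the second family. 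Hence $\omega=\infty$ has geometric multiplicity $n+\dim\ker D$.

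For the finite family I would eliminate the stress. As $A$ (the $H(\mathrm{div})$-elliptic stress block) is invertible, the Schur complement $S=C-BA^{-1}B^\top$ is invertible and a block determinant gives
\[
\det(\mathcal{M}-\omega\mathcal{N})=\det(A)\,\det\!\bigl(S+\omega\,BA^{-1}D\bigr).
\]
Thus the finite eigenvalues are the roots of $\det(S+\omega BA^{-1}D)$, i.e. the reciprocals of the nonzero eigenvalues of the $m\times m$ matrix $W:=-S^{-1}BA^{-1}D$, whose nonzero spectrum coincides with that of $T$ (the matrix $T$ has a vanishing first block column, hence is block triangular with diagonal blocks $0$ and $W$). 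Since $\operatorname{rank}(W)\le\operatorname{rank}(D)$ there are at most $\operatorname{rank}(D)$ of them, and because $W$ carries no symmetry these finite eigenvalues may be genuinely complex — this is the third family and the mechanism behind the non-real spectra reported later. Matching the counts, $\dim\Sigma_h+\dim\ker D+\operatorname{rank}(D)=n+m$ is exactly the size of the pencil.

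The step I expect to be the main obstacle is upgrading ``at most $\operatorname{rank}(D)$'' to ``exactly $\operatorname{rank}(D)$''. Equivalently, one must show that the algebraic multiplicity of $\omega=\infty$ equals the geometric multiplicity $n+\dim\ker D$ found above, i.e. that the pencil has no Jordan chains at infinity. Through $W$ this splits into $\ker W=\ker D$, equivalent to the rank identity $\operatorname{rank}(BA^{-1}D)=\operatorname{rank}(D)$, together with the non-defectiveness of the zero eigenvalue of $W$. I would obtain the rank identity from the invertibility of $A$ and $S$ and the particular pairing of the forms $B$ and $D$ in~\eqref{eq:the_forms}, showing that $\hat{\mathbf u}\mapsto BA^{-1}D\hat{\mathbf u}$ is injective modulo $\ker D$; the absence of nontrivial Jordan blocks then has to be checked on the reduced $m\times m$ problem. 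Granting this non-degeneracy, the three multiplicities sum to $n+m$ and the classification follows.
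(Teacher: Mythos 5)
A preliminary remark: the paper itself does not prove Proposition~\ref{pro:eigval_families} — it is imported from~\cite{FB_LE} — so the only in-paper material to compare against is the Schur-complement discussion that follows it. Your reduction reproduces exactly that discussion: eliminating $\hat{\bm{\sigma}}_h$ through $A^{-1}$ gives~\eqref{eq:Shur_u}, and your matrix $W=-S^{-1}BA^{-1}D$ with $S=C-BA^{-1}B^\top$ is precisely~\eqref{eq:Shur_u} rewritten as $W\hat{\textbf{u}}_h=(1/\omega_h)\,\hat{\textbf{u}}_h$. Your account of the infinite eigenvalues (splitting $\ker\mathcal{N}$ into the vectors $(\hat{\bm{\sigma}}_h,0)$ and those with $\hat{\textbf{u}}_h\in\ker D$) and the identity $\det(\mathcal{M}-\omega\mathcal{N})=\det(A)\det(S+\omega BA^{-1}D)$ are both correct. (One small inaccuracy: invertibility of $S$ does not follow from invertibility of $A$ alone, as you write, but from $\det\mathcal{M}=\det A\cdot\det S\neq0$, or from positive definiteness of $\mathcal{M}$.)

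The genuine gap is the one you flag yourself and then assume away: that the finite family has multiplicity \emph{exactly} $\operatorname{rank}(D)$. Your argument yields only the upper bound, and the missing half is not a technicality recoverable from ``the invertibility of $A$ and $S$ and the particular pairing of the forms'': it is false at the level of generality at which you are working. Take $n=m=2$, $A=I$, $B=I$, $C=2I$, $D=\left(\begin{smallmatrix}0&1\\0&0\end{smallmatrix}\right)$. Then $\mathcal{M}$ is symmetric positive definite and $\operatorname{rank}(D)=1$, yet $S=I$, $BA^{-1}D=D$ is nilpotent, and $\det(\mathcal{M}-\omega\mathcal{N})=\det(A)\det(I+\omega D)=1$: there are \emph{no} finite eigenvalues, and the ``missing'' one sits in a Jordan chain at $\omega=\infty$ (algebraic multiplicity $4$, geometric multiplicity $3=\dim\Sigma_h+\dim\ker D$). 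So equality in the third bullet requires genuinely problem-specific facts about the $RT_0$/$P_1$ matrices — namely $\operatorname{rank}(BA^{-1}D)=\operatorname{rank}(D)$ together with non-defectiveness of the zero eigenvalue of $S^{-1}BA^{-1}D$ — which you never establish (and for which there is no obvious argument); the alternative is to read the proposition with the third family counted as \emph{at most} $\operatorname{rank}(D)$, any defect being absorbed at $\omega_h=\infty$. As it stands, your proposal proves the two infinite families and only the inequality for the finite one, which is strictly weaker than the statement.
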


We are actually interested in the eigenpairs corresponding to a non-zero 
displacement $\textbf{u}_h\ne\textbf{0}$. This can be easily identified by considering
the following Schur complement approach.
We can rewrite the algebraic linear system of the block matrices 
in \eqref{eq:discte_matrix_structure} as follows:
\begin{equation}\label{eq:Schur_complement}
\begin{cases}
A\bm{\hat{\sigma}}_h + B^\top \hat{\textbf{u}}_h 
= \omega_h D \hat{\textbf{u}}_h, \\
B\hat{\bm{\sigma}}_h + C \hat{\textbf{u}}_h = \textbf{0}.
\end{cases} 
\end{equation}
Looking at the pair of interest, and using the fact that $A$ is invertible,
we can rewrite the first equation in \eqref{eq:Schur_complement} as follows:
$$\hat{\bm{\sigma}}_h = A^{-1}(\omega_h D -B^\top)\hat{\textbf{u}}_h.$$
Now by substituting the value of $\hat{\bm\sigma}_h$ in the second equation of System
\eqref{eq:Schur_complement} and by combining common terms together, we arrive
to the Schur complement linked only to the displacement given by
\begin{equation}\label{eq:Shur_u}
(B A^{-1}B^\top - C)\hat{\textbf{u}}_h = \omega_h B A^{-1} D \hat{\textbf{u}}_h.
\end{equation}
\begin{remark}\label{rem:Shur_sigma}
In general, one can deduce two Schur complements associated 
with System \eqref{eq:Schur_complement}, one linked with the 
$\hat{\bm{\sigma}}_h$ variable and another to the variable
$\hat{\textbf{u}}_h$.
That is, we can initially seek for the pair $(\omega_h, \hat{\bm{\sigma}}_h)$ and then obtain the eigenpair 
$(\omega_h,\hat{\textbf{u}}_h)$ (see~\cite[Sec.\ 3.1]{LZ_FB_DB}).
If one naively picks the Schur complement associated with $\hat{\bm{\sigma}}_h$, 
artificial displacement (that is $\hat{\textbf{u}}_h = 0$ ) which is 
not related to our problem occurs, thus, wrong solution would then be present. 
Hence, one needs to be careful while selecting the proper Schur complement 
associated to the variable of interest in order to ensure that we are
considering only genuine pairs for the displacement.
\end{remark}
\section{Convergence analysis}\label{se:convergence analysis}

The convergence analysis of eigenvalue problems has the standard abstract setting 
presented in~\cite{acta_num,Babuska_eig_prob}. 
Usually the analysis is divided into two steps: first considering the
convergence of eigenmodes (with the absence of superior modes) that is a
consequence of the convergence of the discrete solution operator to the
continuous one, then discussing the rate of convergence.

To this end we introduce the solution operator 
$T:L^2(\Omega) \rightarrow L^2(\Omega)$ associated with formulation~\eqref{eq:variationa_form_eig}. 
Given 
$\textbf{f} \in L^2(\Omega)^d$ we define $T\textbf{f}\in H_0^1(\Omega)^d$ 
being the second component $\textbf{u}$ of the solution of \eqref{eq:variationa_form_source}, which solves the following problem for some 
$\underline{\bm{\sigma}} \in \underline{\bm{X}}_N$
\begin{equation}\label{eq:cont_operator_problem}
\begin{cases}
(\mathbf{\mathcal{A}}\underline{\bm{\sigma}},\mathbf{\mathcal{A}}\underline{\bm{\tau}})
+
(\textbf{div}\underline{\bm{\sigma}},\textbf{div}\underline{\bm{\tau}})
-
(\mathbf{\mathcal{A}}\underline{\bm{\tau}},\underline{\bm{\epsilon}}(T\textbf{f}))
= 
-(\textbf{f}, \textbf{div}\underline{\bm{\tau}})
&\quad \forall \underline{\bm{\tau}} \in \underline{\bm{X}}_N, \\
-
(\mathbf{\mathcal{A}}\underline{\bm{\sigma}}, \underline{\bm{\epsilon}}(\textbf{v}))
+
(\underline{\bm{\epsilon}}(T\textbf{f}), 
\underline{\bm{\epsilon}}(\textbf{v}))
=
\bm{0}
&\quad \forall\textbf{v} \in H^{1}_{0,D}(\Omega)^d.
\end{cases}
\end{equation}   

Since the range of the operator $T$ is included in the space 
$H_0^1(\Omega)^d$
which is compact in $L^2$, then $T$ is compact.

The discrete solution operator $T_h: L^2(\Omega)\rightarrow L^2(\Omega)$ with
$\textbf{f}\in L^2(\Omega)^d$ is defined as the second component
$T_h\textbf{f}\in U_h$ of the solution of the Galerkin approximation
of~\eqref{eq:variationa_form_source} which solves the following problem for
some $\underline{\bm{\sigma}}_h \in \Sigma_h$
\begin{equation}\label{eq:discrete_operator_problem}
\begin{cases}
(\mathbf{\mathcal{A}}\underline{\bm{\sigma}}_h,\mathbf{\mathcal{A}}\underline{\bm{\tau}})
+
(\textbf{div}\underline{\bm{\sigma}}_h,\textbf{div}\underline{\bm{\tau}})
-
(\mathbf{\mathcal{A}}\underline{\bm{\tau}},\underline{\bm{\epsilon}}(T_h\textbf{f}))
= 
-(\textbf{f}, \textbf{div}\underline{\bm{\tau}})
&\quad \forall \underline{\bm{\tau}} \in \Sigma_h, \\
-
(\mathbf{\mathcal{A}}\underline{\bm{\sigma}}_h, \underline{\bm{\epsilon}}(\textbf{v}))
+
(\underline{\bm{\epsilon}}(T_h\textbf{f}), 
\underline{\bm{\epsilon}}(\textbf{v}))
=
\bm{0}
&\quad \forall\textbf{v} \in U_h.
\end{cases}
\end{equation}

In~\cite{FB_LE} it was stated that the uniform convergence of $T_h$ to $T$
holds true, so that the discrete eigenvalues converge to the continuous ones
without spurious modes.
In the next theorem we will give a rigorous proof of the uniform convergence,
by giving more details than what was previously published.

We start by recalling the expression of the generic dual problem associated with the
variational formulation of the source problem as follows: given
$\textbf{g}\in L^2(\Omega)^d$
find $\underline{\bm{\chi}} \in \underline{\bm{X}}_N$ and $\textbf{p}\in H^1_0(\Omega)^d$ such that
\begin{equation}\label{eq:dual_problem_variational_form}
\begin{cases}
(\mathbf{\mathcal{A}}\underline{\bm{\chi}},\mathbf{\mathcal{A}}\underline{\bm{\xi}})
+
(\textbf{div}\underline{\bm{\chi}},\textbf{div}\underline{\bm{\xi}})
-
(\mathbf{\mathcal{A}}\underline{\bm{\xi}},\underline{\bm{\epsilon}}(\textbf{p}))
= 
\bm{0}
&\quad \forall \underline{\bm{\xi}} \in \underline{\bm{X}}_N, \\
-
(\mathbf{\mathcal{A}}\underline{\bm{\chi}}, \underline{\bm{\epsilon}}(\textbf{v}))
+
(\underline{\bm{\epsilon}}(\textbf{p}),
\underline{\bm{\epsilon}}(\textbf{v}))
=
(\textbf{g}, \textbf{v})
&\quad \forall\textbf{v} \in H^{1}_{0,D}(\Omega)^d.
\end{cases}
\end{equation}

\begin{theorem}\label{thm:uniform_convergence}
Let $s>1/2$ be such that $\normalfont\textbf{u} \in H^{1+s}(\Omega)^d$, where
$\normalfont\textbf{u}$ is the second component of the solution to
\eqref{eq:variationa_form_source}, and such that the following stability of
the dual problem holds true
\begin{equation}\label{bound:minimum_regularity_dual}
||\textbf{p}||_{s+1} + ||\underline{\bm{\chi}}||_s+||\normalfont\textbf{div}\underline{\bm{\chi}}||_{s+1}   \leq
C ||\textbf{g}||_0.
\end{equation}
Let $\normalfont\textbf{u}_h \in U_h$  be the numerical solution corresponding
to $\normalfont\textbf{u}$.
Assume that the finite element spaces $\Sigma_h$ and $U_h$ satisfy the following 
approximation properties 
\begin{equation}
\inf\limits_{\underline{\bm{\tau}} \in\Sigma_h}|| \underline{\bm{\chi}} - 
\underline{\bm{\tau}}||_{\normalfont\textbf{div}}
\leq
C h^s (||\underline{\bm{\chi}}||_s 
+ || \normalfont\textbf{div} \underline{\bm{\chi}}||_{1+s}),
\label{eq:RTapprox}
\end{equation}
\begin{equation}
  \inf\limits_{\textbf{v} \in U_h}|| \textbf{p} - 
\textbf{v}||_{1}
\leq
C h^s ||\textbf{p}||_{1+s},
\label{eq:Papprox}
\end{equation}
then we have
$$
||\normalfont\textbf{u} - \normalfont\textbf{u}_h||_0 \leq C h^s 
||\textbf{f}||_0.
$$
\end{theorem}
\begin{proof}
Consider the dual problem \eqref{eq:dual_problem_variational_form} and recall
the stability bound~\eqref{bound:minimum_regularity_dual}.
By choosing $\textbf{g} = \textbf{u} - \textbf{u}_h$ and taking the test functions in \eqref{eq:dual_problem_variational_form} to be 
$\underline{\bm{\xi}} = \underline{\bm{\sigma}} - \underline{\bm{\sigma}}_h$, $\textbf{v} = \textbf{u}-\textbf{u}_h$, we have
\begin{align*}
||\textbf{u}- \textbf{u}_h||_0^2 &= (\mathbf{\mathcal{A}}\underline{\bm{\chi}},\mathbf{\mathcal{A}}(\underline{\bm{\sigma}} - \underline{\bm{\sigma}}_h)) 
+
(\textbf{div}\underline{\bm{\chi}},\textbf{div}(\underline{\bm{\sigma}} -  
\underline{\bm{\sigma}}_h)) \\
& - 
(\mathbf{\mathcal{A}}(\underline{\bm{\sigma}} - \underline{\bm{\sigma}}_h),
\underline{\bm{\epsilon}}(\textbf{p})) 
-
(\mathbf{\mathcal{A}}\underline{\bm{\chi}},
\underline{\bm{\epsilon}}(\textbf{u}-\textbf{u}_h))\\
& +
(\underline{\bm{\epsilon}}(\textbf{p}), \underline{\bm\epsilon}(\textbf{u}-\textbf{u}_h)).
\end{align*}
Now using the error equations of \eqref{eq:variationa_form_source}, for all 
$\underline{\bm{\tau}}_h \in \Sigma_h$ and $\textbf{v}_h \in U_h$ we arrive to
\begin{align*}
||\textbf{u}- \textbf{u}_h||_0^2 
&= 
(\mathcal{A}(\underline{\bm{\chi}}-\underline{\bm{\tau}}_h),\mathcal{A}(\underline{\bm{\sigma}} - \underline{\bm{\sigma}}_h)) 
+
(\textbf{div}(\underline{\bm{\chi}}-\underline{\bm{\tau}}_h),\textbf{div}(\underline{\bm{\sigma}}- \underline{\bm{\sigma}}_h)) \\
&- 
(\mathbf{\mathcal{A}}(\underline{\bm{\sigma}} - \underline{\bm{\sigma}}_h),
\underline{\bm{\epsilon}}(\textbf{p}-\textbf{v}_h)) 
-
(\mathbf{\mathcal{A}}(\underline{\bm{\chi}}-\underline{\bm{\tau}}_h),
\underline{\bm{\epsilon}}(\textbf{u}-\textbf{u}_h))\\
& +
(\underline{\bm{\epsilon}}(\textbf{p}-\textbf{v}_h), \underline{\bm\epsilon}(\textbf{u}-\textbf{u}_h)).
\end{align*}
After using the Cauchy--Schwarz inequality, the definitions of
$||\cdot||_{\textbf{div}}$, $||\cdot||_1$, and collecting common terms, we arrive to
\begin{align*}
||\textbf{u}-\textbf{u}_h||_0^2 
&\leq 
C (||\underline{\bm{\chi}}-\underline{\bm{\tau}}_h||_\textbf{div}
+
||\textbf{p}-\textbf{v}_h||_1)
(||\underline{\bm{\sigma}}- \underline{\bm{\sigma}}_h||_\textbf{div}+
||\textbf{u}-\textbf{u}_h||_1)\\
&\leq
C (||\underline{\bm{\chi}}-\underline{\bm{\tau}}_h||_\textbf{div}
+
||\textbf{p}-\textbf{v}_h||_1)
||\textbf{f}||_0 .
\end{align*}
Applying the approximation properties of $\Sigma_h$ and $ U_h$ to the above
and using the regularity of the dual problem
in~\eqref{bound:minimum_regularity_dual} we get
$$
||\textbf{u}-\textbf{u}_h||_0^2 \leq Ch^s ||\textbf{u}-\textbf{u}_h||_0 ||\textbf{f}||_0.
$$
\end{proof}

\begin{remark}
The regularity assumed in~\eqref{bound:minimum_regularity_dual} requires some
explanation. Actually, it has been recently observed~\cite{brendan} that a
dual problem arising from least-squares formulations does not necessarily
share the same regularity properties as the original problem. By inspecting
the variational form~\eqref{eq:dual_problem_variational_form} it can be
observed that formally the solution of the dual problem satisfies
the following strong formulation
\begin{equation}\label{eq:dual_strong_form}
\begin{split}
\mathcal{A}\underline{\bm{\chi}}  = \underline{\bm{\epsilon}}(\textbf{p}) -
\mathcal{C}\nabla\textbf{w}\\
-\textbf{div}\underline{\bm{\chi}} = \textbf{w}
\end{split}
\end{equation}
where $\textbf{w}$ satisies
\[
- \textbf{div}(\mathcal{C} \underline{\bm{\epsilon}} (\textbf{w}))
= \textbf{g}.
\]
We can then deduce the existence of a suitable $s$ so
that~\eqref{bound:minimum_regularity_dual} is satisfied. However, we cannot
conclude that if $\textbf{u}$ is more regular, then the same regularity can be
automatically transferred to the solution of the dual problem. We will go back
to this discussion when estimating the rate of convergence of the
approximation of the eigenvalues.
\label{re:duality}
\end{remark}

\begin{remark}
The approximation properties~\eqref{eq:RTapprox} and~\eqref{eq:Papprox} stated
in Theorem~\ref{thm:uniform_convergence} are satisfied by the most natural
finite element spaces that can be used for the approximation of our problem.
For instance Raviart--Thomas finite element enjoy~\eqref{eq:RTapprox} and
standard Lagrangian finite element achieve~\eqref{eq:Papprox}. An interesting
question is whether such estimates depend on the Lam\'e coefficients.
Actually, the constants $C$ are independent of the coefficients, while it
can be expected that the norms of the dual solution might be affected by them.
\end{remark}

The uniform convergence implies the convergence of the discrete eigenvalues
which we recall in the next proposition 
(see~\cite{acta_num} and~\cite{Babuska_eig_prob}).

\begin{proposition}\label{pro:convergence_of_T}
Assume that the operator $T_h$ converges in norm to $T$ as $h$ goes to
zero, that is,
\begin{equation}\label{eq:convergence_of_T}
||T-T_h||_{\mathcal{L}(X)}\to0.
\end{equation}
Let $\Tlam_i=\Tlam_{i+1}=\dots = \Tlam_{i+m-1}$ be an eigenvalue of
multiplicity $m$ of the operator $T$.
Then for $h$ small enough, (so that the total number of discrete finite
eigenvalues is greater than or equal to $i+m-1$), the $m$ discrete eigenvalues
$\Tlam_{j,h}$ ($j= i,\dots, i+m-1$)  of with $T_h$ converge to $\Tlam_j$.
Moreover, let $E = \bigoplus_{j=i}^{i+m-1} E_j$ be the continuous eigenspace spanned by 
$\{u_i,\dots,u_{i+m-1}\}$, and $E_h = \bigoplus_{j=i}^{i+m-1} E_{j,h}$ be the
direct sum of the corresponding discrete eigenspaces spanned by 
$\{u_{i,h},\dots,u_{i+m-1,h}\}$.
Then the corresponding eigenfunctions converge, that is
\begin{equation}
\delta(E,E_{h})\to0,
\end{equation}
where $\delta$ denotes the gap between Hilbert subspaces.
\end{proposition}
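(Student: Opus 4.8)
The plan is to follow the classical spectral approximation theory for compact operators based on Riesz spectral projections, as developed in \cite{acta_num,Babuska_eig_prob}; the assumed norm convergence \eqref{eq:convergence_of_T} is precisely the hypothesis that activates this machinery, and no self-adjointness of $T$ or $T_h$ is required. First I would fix a circle $\Gamma$ in the complex plane, centered at $\lambda_i$, whose enclosed disk contains $\lambda_i$ but no other point of the spectrum of $T$. Such a contour exists because $T$ is compact, so its nonzero spectrum consists of isolated eigenvalues of finite algebraic multiplicity. Along $\Gamma$ the resolvent $(z-T)^{-1}$ is bounded, say by $M=\sup_{z\in\Gamma}\|(z-T)^{-1}\|_{\mathcal{L}(X)}<\infty$. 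Using the perturbation identity $(z-T_h)^{-1}=(z-T)^{-1}\bigl[I-(T_h-T)(z-T)^{-1}\bigr]^{-1}$, valid whenever $\|(T_h-T)(z-T)^{-1}\|<1$, together with the estimate $\|T-T_h\|_{\mathcal{L}(X)}\,M<1$ guaranteed by \eqref{eq:convergence_of_T} for $h$ small, I would conclude that $\Gamma$ lies in the resolvent set of $T_h$ and that $(z-T_h)^{-1}$ is uniformly bounded on $\Gamma$.

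Next I would introduce the Riesz spectral projections
$$P=\frac{1}{2\pi i}\oint_\Gamma (z-T)^{-1}\,dz,\qquad P_h=\frac{1}{2\pi i}\oint_\Gamma (z-T_h)^{-1}\,dz,$$
whose ranges are exactly the invariant subspaces $E$ and $E_h$ associated with the part of the spectrum enclosed by $\Gamma$; by construction the range of $P$ is the eigenspace $E$ of the statement. The second resolvent identity $(z-T_h)^{-1}-(z-T)^{-1}=(z-T_h)^{-1}(T_h-T)(z-T)^{-1}$, combined with the uniform resolvent bounds on $\Gamma$, yields after integration over the compact contour the estimate $\|P-P_h\|_{\mathcal{L}(X)}\le C\,\|T-T_h\|_{\mathcal{L}(X)}\to0$. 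Once $\|P-P_h\|<1$, the standard fact that two projections at distance less than one have ranges of equal dimension gives $\dim E_h=\dim E=m$, so that exactly $m$ discrete eigenvalues (counted with algebraic multiplicity) lie inside $\Gamma$, and the gap estimate $\delta(E,E_h)\le C\,\|P-P_h\|\to0$ follows immediately.

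Finally, eigenvalue convergence is obtained by letting the radius of $\Gamma$ shrink: for every $\varepsilon>0$ one may take $\Gamma$ of radius $\varepsilon$, and the argument above shows that for $h$ small enough all $m$ discrete eigenvalues $\lambda_{j,h}$ lie within $\varepsilon$ of $\lambda_i$, which is precisely the assertion $\lambda_{j,h}\to\lambda_j$ (recall $\lambda_j=\lambda_i$ for $j=i,\dots,i+m-1$). I expect the main obstacle to be the non-self-adjointness of the problem: since $T_h$ need not be normal, the discrete eigenvalues may be genuinely complex and the relevant objects are the generalized (algebraic) eigenspaces cut out by the Riesz projections rather than orthogonal eigenspaces, so both the counting argument and the gap estimate must be phrased through $P$ and $P_h$ instead of a spectral decomposition. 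The delicate point is verifying that $\Gamma$ stays uniformly away from $\sigma(T_h)$ — equivalently, that no spurious discrete eigenvalue escapes into the region enclosed by $\Gamma$ — and this is exactly where the \emph{norm} convergence \eqref{eq:convergence_of_T}, rather than mere strong convergence, is indispensable.
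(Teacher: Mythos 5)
Your proposal is correct: the Riesz spectral projection argument (a contour $\Gamma$ isolating $\lambda_i$, the resolvent perturbation identity giving uniform invertibility of $z-T_h$ on $\Gamma$, the estimate $\|P-P_h\|_{\mathcal{L}(X)}\le C\|T-T_h\|_{\mathcal{L}(X)}$, equality of the dimensions of the ranges once $\|P-P_h\|<1$, and shrinking the radius of $\Gamma$ to get eigenvalue convergence) is precisely the classical proof of this result. The paper itself offers no proof of Proposition~\ref{pro:convergence_of_T}\,---\,it recalls it as a standard consequence of norm convergence and defers to the spectral approximation theory of \cite{acta_num,Babuska_eig_prob}\,---\,so your argument is exactly the machinery the paper implicitly invokes, including your correct observation that, absent symmetry of the discrete problem, the objects compared must be the invariant subspaces cut out by the Riesz projections rather than orthogonal eigenspaces.
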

Moving to the rate of convergence, we start by stating the classical result
for the convergence of the eigenfunctions (see, for instance,~\cite[Thm.\
7.1]{Babuska_eig_prob}).

\begin{theorem}
Let $X$ be $L^2(\Omega)^d$ or $H^1(\Omega)^d$.
With the same notation and assumptions of
Proposition~\ref{pro:convergence_of_T} we have that
\[
\delta(E,E_{h})\leq  C ||(T-T_h)_{|E}||_{\mathcal{L}(X)}
\]
where the gap $\delta$ is evaluated in the norm of $X$.
\label{th:eigfun}
\end{theorem}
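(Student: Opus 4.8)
The statement is the classical Babuška--Osborn spectral-approximation result, and the plan is to prove it through the Riesz spectral projections defined by contour integrals of the resolvents of $T$ and $T_h$. First I would exploit the compactness of $T$: its nonzero spectrum is discrete, so the eigenvalue $\lambda := \lambda_i = \dots = \lambda_{i+m-1}$ can be isolated by a circle $\Gamma \subset \CO$ that encloses $\lambda$ and leaves every other point of $\sigma(T)$ outside. On $\Gamma$ the resolvent $(z-T)^{-1}$ is bounded, and since $\|T-T_h\|_{\mathcal{L}(X)}\to0$ by hypothesis~\eqref{eq:convergence_of_T}, for $h$ small enough $\Gamma$ also avoids $\sigma(T_h)$ and the discrete resolvent $(z-T_h)^{-1}$ is bounded on $\Gamma$ \emph{uniformly in} $h$.

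Next I would introduce the spectral projections
\[
P=\frac{1}{2\pi i}\oint_\Gamma (z-T)^{-1}\,dz,\qquad
P_h=\frac{1}{2\pi i}\oint_\Gamma (z-T_h)^{-1}\,dz,
\]
which are projections onto $E$ and onto the discrete invariant subspace $E_h$, respectively; for $h$ small the rank of $P_h$ equals $m$ by Proposition~\ref{pro:convergence_of_T}. The gap is then controlled by these projections: for any $u\in E$ one has $Pu=u$ while $P_hu\in E_h$, so $\operatorname{dist}(u,E_h)\le\|u-P_hu\|=\|(P-P_h)u\|$, and taking the supremum over the unit sphere of $E$ yields $\delta(E,E_h)\le\|(P-P_h)_{|E}\|$.

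The crux is to estimate $\|(P-P_h)_{|E}\|$ in terms of $\|(T-T_h)_{|E}\|$. Subtracting the two contour integrals and applying the resolvent identity $(z-T)^{-1}-(z-T_h)^{-1}=(z-T_h)^{-1}(T-T_h)(z-T)^{-1}$, I would use that $E$ is $T$-invariant with $(z-T)^{-1}u=(z-\lambda)^{-1}u$ for $u\in E$. This is the key bookkeeping step: it places the perturbation $T-T_h$ acting precisely on vectors of $E$, so that only the restricted norm $\|(T-T_h)_{|E}\|$ appears. Bounding the resulting integrand by the uniform resolvent bound on $\Gamma$, by the factor $|z-\lambda|^{-1}$ (bounded since $\Gamma$ stays away from $\lambda$), and by the finite length of $\Gamma$, gives
\[
\|(P-P_h)_{|E}\|\le C\,\|(T-T_h)_{|E}\|,
\]
with $C$ independent of $h$, which combined with the previous step proves the claim.

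The main obstacle is the uniform-in-$h$ control of the discrete resolvent $(z-T_h)^{-1}$ along $\Gamma$: this is exactly where the norm convergence~\eqref{eq:convergence_of_T} is indispensable, since without it $\sigma(T_h)$ could approach $\Gamma$ and the projection $P_h$ would be neither well defined nor of the correct rank. The remaining delicate point is organizing the resolvent identity so that $T-T_h$ is genuinely restricted to $E$ rather than estimated on all of $X$; this is what produces the sharper right-hand side $\|(T-T_h)_{|E}\|$ rather than the cruder $\|T-T_h\|_{\mathcal{L}(X)}$.
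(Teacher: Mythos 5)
The paper offers no proof of Theorem~\ref{th:eigfun}: it is stated as a classical result with a pointer to \cite[Thm.\ 7.1]{Babuska_eig_prob}, and your contour-integral/spectral-projection argument (uniform boundedness of $(z-T_h)^{-1}$ on $\Gamma$ from norm convergence, the resolvent identity, and $(z-T)^{-1}u=(z-\lambda)^{-1}u$ for $u\in E$, giving $\|(P-P_h)_{|E}\|\le C\|(T-T_h)_{|E}\|_{\mathcal{L}(X)}$) is precisely the standard proof behind that citation, so your reconstruction is correct and matches the argument the paper relies on. The only step worth completing is that if $\delta$ is the symmetric gap, your one-sided bound $\sup_{u\in E,\ \|u\|=1}\mathrm{dist}(u,E_h)\le\|(P-P_h)_{|E}\|$ must be supplemented by the standard finite-dimensional lemma (equal dimensions $m$ and $\delta(E,E_h)<1$ control the opposite one-sided gap), which is routine for $h$ small.
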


The estimation of the rate of convergence for the eigenvalues requires a more
careful analysis since the discrete eigenvalue problem is not symmetric.
To this aim, we embed our problem in the framework of general variationally
posed eigenvalue problems as follows. A general variationally posed eigenvalue
problem reads: given a Hilbert space $\gX$ and suitably defined bilinear forms
$\gA:\gX\times\gX\to\RE$ and $\gB:\gX\times\gX\to\RE$, find
eigenvalues $\glambda\in\CO$ and non vanishing eigenfunctions $\gU\in\gX$ such
that
\[
\gA(\gU,\gV)=\glambda\gB(\gU,\gV)\qquad\forall\gV\in\gX.
\]
We then introduce the solution operator $\gT:\gX\to\gX$ associated with the
general formulation so that if $\gF\in\gX$ then $\gT\gF\in\gX$ is the solution
of the problem
\[
\gA(\gT\gF,\gV)=\gB(\gF,\gV)\qquad\forall\gV\in\gX.
\]
For the moment we proceed formally and assume that the operator $\gT$ is well
defined. It is out of the scope of this paper to recall all the detail of the
general theory which we are going to apply to our specific formulation.
We identify $\gX$ with its dual so that the adjoint operator $\gT^*:\gX\to\gX$
is associated with the dual problem, that is, given $\gG\in\gX$ we have that
$\gT^*\gG\in\gX$ is equal to the solution of the problem
\[
\gA(\gV,\gT^*\gG)=\gB(\gV,\gG)\qquad\forall\gV\in\gX.
\]

Given a finite element space $\gX_h\subset\gX$, we can consider the discrete
eigenvalue problem: find eigenvalues $\glambda_h\in\CO$ and non vanishing
eigenfunctions $\gU_h\in\gX_h$ such that
\[
\gA(\gU_h,\gV)=\glambda_h\gB(\gU_h,\gV)\qquad\forall\gV\in\gX_h.
\]
Analogously, we can consider the discrete solution operator $\gT_h:\gX\to\gX$
that satisfies $\gT_h\gF\in\gX_h$ and
\[
\gA(\gT_h\gF,\gV)=\gB(\gF,\gV)\qquad\forall\gV\in\gX_h,
\]
for $\gF\in\gX$ and its adjoint $\gT^*_h:\gX\to\gX$ which satisfies
$\gT_h^*\gG\in\gX_h$ and
\begin{equation}
\gA(\gV,\gT^*_h\gG)=\gB(\gV,\gG)\qquad\forall\gV\in\gX_h,
\label{eq:sym}
\end{equation}
for $\gG\in\gX$.

We assume that the eigenmodes of $\gT_h$ are approximating correctly the ones
of $\gT$ and we discuss the rate of convergence.
We consider now an eigenvalue
$\glambda_i=\glambda_{i+1}=\dots=\glambda_{i+m-1}$ of $\gT$ of multiplicity
$m$ and the corresponding discrete eigenvalues $\glambda_{j,h}$
($j=i,\dots,i+m-1$) of $\gT_h$ approximating it. We denote by $\gE_j$ the
eigenspace of $\gT$ corresponding to $\glambda_j$ and by $\gE_j^*$ the one
of $\gT^*$.
The following theorem summarizes the rate of convergence of the discrete
eigenvalues (see, for instance,~\cite[Thm.\ 7.3]{Babuska_eig_prob}).

\begin{theorem}\label{thm:bound_eigvals_with_T_T*}
Let $\{\gU_i,\dots,\gU_{i+m-1}\}$ form a basis for the eigenspace
$\gE=\bigoplus_{j=i}^{i+m-1}\gE_j$ and let $\{\gU^*_i,\dots,\gU^*_{i+m-1}\}$
be the dual basis of the corresponding eigenspace
$\gE^*=\bigoplus_{j=i}^{i+m-1}\gE^*_j$ associated with $\gT^*$.
Then for $j = i,\dots, i+m-1$,
\begin{equation}
|\glambda_j-\glambda_{j,h}|\le C\left\{\sum_{k,\ell=1}^m|((\gT-\gT_h)\gU_k,\gU_\ell^*)|
+\|(\gT-\gT_h)_{|\gE}\|_{\mathcal{L}(\gX)}\|(\gT^*-\gT^*_h)_{|\gE^*}\|_{\mathcal{L}(\gX)}\right\}.
\end{equation} 
\end{theorem}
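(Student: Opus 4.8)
The plan is to follow the classical spectral-approximation argument of Osborn, adapted to the non-self-adjoint setting through systematic use of the adjoint operator $\gT^*$. Since $\gT$ is compact and $\gT_h\to\gT$ in $\mathcal{L}(\gX)$, I would first isolate the eigenvalue cluster by a Jordan contour $\Gamma\subset\CO$ enclosing $\glambda_i$ and no other point of the spectrum of $\gT$, and introduce the Riesz spectral projections
\[
P=\frac{1}{2\pi i}\oint_\Gamma (z-\gT)^{-1}\,dz,\qquad
P_h=\frac{1}{2\pi i}\oint_\Gamma (z-\gT_h)^{-1}\,dz .
\]
The resolvent identity together with the norm convergence gives $\|P-P_h\|_{\mathcal{L}(\gX)}\to0$, so that for $h$ small the discrete invariant subspace $\gE_h=P_h\gX$ has the same dimension $m$ as $\gE$, and $P|_{\gE_h}:\gE_h\to\gE$ is an isomorphism. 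Conjugating the restriction $\gT_h|_{\gE_h}$ by this isomorphism produces an operator $\hat{B}_h:\gE\to\gE$ whose spectrum is exactly the cluster $\{\glambda_{j,h}:j=i,\dots,i+m-1\}$; the entire problem is thereby reduced to estimating the eigenvalues of a fixed $m\times m$ matrix.

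Next I would represent $\hat{B}_h$ as a matrix in the biorthogonal bases $\{\gU_k\}$ of $\gE$ and $\{\gU^*_\ell\}$ of $\gE^*$, normalised so that $(\gU_k,\gU^*_\ell)=\delta_{k\ell}$. The matrix of the exact operator $\gT|_{\gE}$ in this basis is $\glambda_i\,\delta_{k\ell}$, because the eigenvalue is semisimple (ascent one), which is the relevant situation here since the continuous eigenspaces are one-dimensional. The key computation is that the entries of $\hat{B}_h$ satisfy
\[
(\hat{B}_h\gU_k,\gU^*_\ell)=\glambda_i\,\delta_{k\ell}
-((\gT-\gT_h)\gU_k,\gU^*_\ell)+R_{k\ell},
\]
where $R_{k\ell}$ collects the terms generated by transporting the discrete invariant subspace $\gE_h$ back to $\gE$ through $P|_{\gE_h}$. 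Since the eigenvalues of a matrix depend Lipschitz-continuously on its entries in the semisimple case, the eigenvalues $\glambda_{j,h}$ of $\hat{B}_h$ obey
\[
|\glambda_j-\glambda_{j,h}|\le C\sum_{k,\ell=1}^m|((\gT-\gT_h)\gU_k,\gU^*_\ell)|+C\max_{k,\ell}|R_{k\ell}|,
\]
which already accounts for the first term of the asserted bound.

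The main obstacle is the control of the remainder $R_{k\ell}$, and this is exactly where the adjoint operator and the product $\|(\gT-\gT_h)_{|\gE}\|\,\|(\gT^*-\gT^*_h)_{|\gE^*}\|$ enter. The term $R_{k\ell}$ measures the discrepancy between pairing the primal residual $(\gT-\gT_h)\gU_k$ against the exact dual eigenfunction $\gU^*_\ell$ and pairing it against its discrete counterpart $\gT^*_h$-eigenfunction. I would exploit the Galerkin structure~\eqref{eq:sym} defining $\gT^*_h$: the pairing of a primal residual with a dual residual is a genuinely bilinear quantity, so replacing $\gU^*_\ell$ by its discrete approximation costs one factor $\|(\gT-\gT_h)_{|\gE}\|$ from the primal side and one factor $\|(\gT^*-\gT^*_h)_{|\gE^*}\|$ from the dual side. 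Establishing $|R_{k\ell}|\le C\|(\gT-\gT_h)_{|\gE}\|\,\|(\gT^*-\gT^*_h)_{|\gE^*}\|$ in this way, and inserting it into the matrix perturbation bound, yields precisely the second term and completes the argument. The delicate point throughout is that, unlike in the self-adjoint case, the primal and dual quantities are genuinely distinct, so the identification $\gE=\gE^*$ is unavailable and the two approximation factors must be tracked separately.
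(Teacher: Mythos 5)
The paper does not actually prove this theorem: it is imported as a known abstract result, with an explicit pointer to \cite[Thm.\ 7.3]{Babuska_eig_prob}, and the only original content surrounding it is the Remark that the ascent equals one (because the continuous problem is symmetric) and the later verification, inside the proof of Theorem~\ref{thm:rate_of_converg}, that both factors $\|(\gT-\gT_h)_{|\gE}\|_{\mathcal{L}(\gX)}$ and $\|(\gT^*-\gT^*_h)_{|\gE^*}\|_{\mathcal{L}(\gX)}$ are of order $\rho(h)$ for the elasticity formulation. So there is nothing in the paper to compare your argument against; what your sketch reconstructs, correctly, is the classical Osborn argument that underlies the citation: Riesz spectral projections plus norm convergence to reduce to an $m\times m$ problem on $\gE$, the identity $\gT_{|\gE}=\glambda_i I$ coming from semisimplicity, and a perturbation bound in the biorthogonal basis $(\gU_k,\gU^*_\ell)=\delta_{k\ell}$. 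The one step you describe only impressionistically is the remainder estimate for $R_{k\ell}$: the standard way to obtain the product structure is to note that the transport error $\bigl[I-(P_{|\gE_h})P_h\bigr](\gT_h-\gT)\gU_k$ lies in the range of $I-P_h$ and has norm $\le C\|(\gT-\gT_h)_{|\gE}\|_{\mathcal{L}(\gX)}$, so that pairing it with $\gU^*_\ell$ amounts to pairing with $(I-P_h^*)\gU^*_\ell=(P^*-P_h^*)\gU^*_\ell$, and the contour-integral representation of $P^*-P_h^*$ then yields the factor $\|(\gT^*-\gT^*_h)_{|\gE^*}\|_{\mathcal{L}(\gX)}$; it is this projection identity, not a generic ``bilinearity'' of the pairing, that makes the dual error appear. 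With that detail supplied, your plan is precisely the proof of the result the paper cites, and it is consistent with the paper's restriction to ascent one (in general the right-hand side must be raised to the power $1/\alpha$).
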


\begin{remark}
In the general case of non symmetric problems, the previous theorem should
take into account also the ascent multiplicities of the eigenvalues.
In our case, however, the continuous problem is symmetric so that we know that
the ascent multiplicities of our eigenvalues are always equal to one.
\end{remark}

We now conclude this section by showing how the abstract results apply to our
specific problem.
As noted in Section~\ref{sc:Contiuouse_porblem}, the eigenvalues $\omega_i$ of
the continuous problem~\eqref{eq:eigenvalue_compliance_First_order_system} are
real, positive and form an increasing sequence with each being repeated
according to its multiplicity. 
One the contrary, the discrete eigenvalues $\omega_{i,h}$
of~\eqref{eq:disdrete_variationa_form_eig} are complex and can be
ordered according to their modules
$$
0\leq |\omega_{1,h}| \leq |\omega_{2,h}| \leq |\omega_{3,h}| \leq \cdots
$$
and again repeated according to their multiplicities.
The convergence of the eigenvalues and absence of spurious modes can, for
instance, be stated as follows (see~\cite[Thm.\ 9.1]{acta_num}).
\begin{theorem}\label{thm:compact_set}
Let us assume that the convergence in norm \eqref{eq:convergence_of_T} is
satisfied. For all compact sets $K$ in the complex plane that do not contain
any eigenvalue of the continuous problem, there exists $h_0$ such that for all
$h<h_0$ no eigenvalue of the discrete problem belongs to $K$. 
\end{theorem}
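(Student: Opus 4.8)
The plan is to deduce Theorem~\ref{thm:compact_set} as a direct consequence of the norm convergence hypothesis~\eqref{eq:convergence_of_T} together with the compactness of the solution operator $T$, using standard perturbation theory for the spectra of compact operators. The key observation is that, since $T$ is compact, its spectrum consists of the isolated eigenvalues $\mu_i = 1/\omega_i$ (together with the accumulation point $0$), each of finite multiplicity; equivalently, working with the solution operators rather than the generalized matrix pencil converts the statement about the eigenvalues $\omega_{i,h}$ into an equivalent statement about the reciprocals, which are the eigenvalues of $T_h$. I would phrase the whole argument in terms of $T$ and $T_h$ and transfer the conclusion back to the $\omega$'s at the end.

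First I would reduce to a statement about resolvents. Fix a compact set $K$ in the complex plane containing no eigenvalue $\omega_i$ of the continuous problem. Passing to reciprocals, the corresponding set $\widetilde K = \{1/z : z \in K\}$ (handling the point at infinity and the origin with care) is a compact set avoiding the nonzero spectrum of $T$; the only accumulation point of $\sigma(T)$ is $0$, so by shrinking attention to the relevant region one may assume $\widetilde K$ is bounded away from $\sigma(T)$. Then for every $\zeta \in \widetilde K$ the resolvent $(\zeta I - T)^{-1}$ exists and, because $\widetilde K$ is compact and $\zeta \mapsto (\zeta I - T)^{-1}$ is continuous on the resolvent set, the quantity $\sup_{\zeta \in \widetilde K}\|(\zeta I - T)^{-1}\|_{\mathcal L(X)}$ is finite; call it $M$.

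Next I would use the standard resolvent perturbation identity. For $\zeta \in \widetilde K$ write
\[
\zeta I - T_h = (\zeta I - T)\bigl(I - (\zeta I - T)^{-1}(T - T_h)\bigr).
\]
By the norm convergence~\eqref{eq:convergence_of_T}, choose $h_0$ so small that $\|T - T_h\|_{\mathcal L(X)} < 1/(2M)$ for all $h < h_0$. Then the operator $(\zeta I - T)^{-1}(T - T_h)$ has norm strictly less than $1/2$, so by the Neumann series the factor $I - (\zeta I - T)^{-1}(T - T_h)$ is invertible, and hence $\zeta I - T_h$ is invertible for every $\zeta \in \widetilde K$. This means no $\zeta \in \widetilde K$ is an eigenvalue of $T_h$, i.e. no $\omega_{i,h}$ lies in $K$. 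Translating back through the reciprocal correspondence, and noting that the eigenvalue $\omega_h = \infty$ (resp.\ $\gamma = 0$ for $T_h$) is harmless since $K$ is compact, yields the claim.

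The main obstacle, and the only place that requires genuine care rather than routine invocation, is the bookkeeping around the reciprocal map $\omega \leftrightarrow 1/\omega$ and the role of the infinite eigenvalues identified in Proposition~\ref{pro:eigval_families}. One must verify that the finite discrete eigenvalues $\omega_{i,h}$ are exactly the reciprocals of the nonzero eigenvalues of $T_h$, that the large families with $\omega_h = \infty$ correspond to the kernel eigenvalue $0$ of $T_h$ and therefore never intersect the compact set $K$, and that the spectral mapping is compatible between the generalized pencil formulation and the operator formulation. Once this correspondence is established, the resolvent argument is entirely standard; it is essentially a restatement of the fact that norm convergence of operators forbids spectral pollution on compact sets disjoint from $\sigma(T)$, which is precisely the content of the cited result~\cite[Thm.\ 9.1]{acta_num}.
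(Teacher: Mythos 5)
Your proposal is correct in substance, but note that the paper itself does not prove Theorem~\ref{thm:compact_set} at all: the statement is recalled as a known result, with the proof delegated entirely to the citation \cite[Thm.\ 9.1]{acta_num}. What you have written is essentially a reconstruction of the standard argument behind that cited theorem: pass from the matrix pencil to the solution operators $T$ and $T_h$, check that the finite discrete eigenvalues $\omega_{j,h}$ are exactly the reciprocals of the nonzero eigenvalues of $T_h$ (the infinite families of Proposition~\ref{pro:eigval_families} never meet a bounded set $K$, and $\omega_h=0$ cannot occur since the left-hand-side matrix is invertible), and then use a resolvent/Neumann-series perturbation argument to show that the norm convergence \eqref{eq:convergence_of_T} forbids spectrum of $T_h$ from appearing in any region uniformly separated from $\sigma(T)$. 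So relative to the paper your route is ``different'' only in that it is self-contained where the paper relies on a reference; relative to the cited reference it is the same proof, and it is a valid one.

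Two repairs are needed before the write-up is airtight. First, your factorization identity has a sign error: as written,
\[
(\zeta I - T)\bigl(I - (\zeta I - T)^{-1}(T - T_h)\bigr) = \zeta I - 2T + T_h \neq \zeta I - T_h,
\]
whereas the identity you want is
\[
\zeta I - T_h = (\zeta I - T)\bigl(I + (\zeta I - T)^{-1}(T - T_h)\bigr);
\]
the Neumann-series step then goes through verbatim, since only $\|T-T_h\|_{\mathcal{L}(X)}$ enters the smallness condition. Second, your claim that $\widetilde K=\{1/z : z\in K\}$ is compact fails precisely in the case your parenthetical remark gestures at: the hypotheses allow $0\in K$ (zero is not a continuous eigenvalue), and then $\widetilde K$ is unbounded. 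This is harmless but should be argued, not assumed away: since $K\subset\{|z|\le R\}$ one has $\widetilde K\subset\{|\zeta|\ge 1/R\}$, and the computation $|1/z-1/\omega_i| = |z-\omega_i|/(|z|\,|\omega_i|)$ combined with $\operatorname{dist}(K,\{\omega_i\})>0$ gives $\operatorname{dist}(\widetilde K,\sigma(T))>0$; the uniform resolvent bound $M=\sup_{\zeta\in\widetilde K}\|(\zeta I - T)^{-1}\|<\infty$ then follows by combining continuity of the resolvent on the closure of the bounded part of $\widetilde K$ with the elementary estimate $\|(\zeta I - T)^{-1}\|\le (|\zeta|-\|T\|)^{-1}$ valid for $|\zeta|$ large. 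With these two adjustments your argument is complete and matches the content of the result the paper invokes.
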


As a consequence of the previous theorem, a more concrete representation of
the eigenvalue convergence in the complex plane can be given as follows.
\begin{property}\label{def:convergence_of_eigenvalues}
Let $B$ be a ball with radius $R >0$ and let $S = \{\omega_i\}_{i=1}^{n}$ be
the set of $n$ (depending on $R$) eigenvalues, counted with their
multiplicity, being inside $B$, (i.e. $|\omega_i|< R$).
Then, $\forall R >0$ and $\forall \epsilon > 0$, $ \exists h_0$ such that for
$h<h_0$ exactly $n$ discrete eigenvalues, counted with their multiplicity, are
inside $B$, that is $|\omega_{i,h}| < R$. Moreover, the $n$ discrete
eigenvalues can be sorted such that
\begin{equation}
|\omega_i - \omega_{i,h}| <  \epsilon \quad i= 1,2 \dots, n.
\end{equation}
\end{property}

In~\cite{FB_LE} the following theorem was stated concerning the rate of
convergence of the eigenvalues. We are now going to prove it rigorously by
using the abstract setting that we have introduced.
\begin{theorem}\label{thm:rate_of_converg}
Let $\omega_i = \omega_{i+1}= \dots = \omega_{i+m-1}$ be an eigenvalue
of~\eqref{eq:variationa_form_eig} of multiplicity $m$ and denote by $E =
\bigoplus_{j=i}^{i+m-1} E_j$ the corresponding eigenspace. Let $E^*$ be the
corresponding eigenspace of the adjoint problem.
If the discrete spaces $\Sigma_h$ and $U_h$ satisfy the approximation properties 
in Theorem~\ref{thm:uniform_convergence}, then for h small enough 
(so that $\dim U_h \geq i+m-1$) the m discrete eigenvalues 
$\omega_{i,h},\ \omega_{i+1,h},\dots,\ \omega_{i+m-1,h}$
of~\eqref{eq:disdrete_variationa_form_eig} converge to $\omega_i$.
Denote by $E_h$ the direct sum of the discrete eigenspaces and consider the
following quantities
\[
\rho(h)=\sup_{\substack{\normalfont\textbf{u}\in E \\ ||\normalfont\textbf{u}||=1}}
\inf_{\substack{{\underline{\bm{\tau}}\in \Sigma_h} \\
\textbf{v} \in U_h}}
(||\underline{\bm{\sigma}}-\underline{\bm{\tau}}||_{\mathbf{div}}+ 
||\textbf{u}-\textbf{v}||_1)
\]
\[
\rho^*(h)=\sup_{\substack{\normalfont\textbf{p}\in E^*\\ ||\normalfont\textbf{p}||=1}}
\inf_{\substack{{\underline{\bm{\xi}}\in \Sigma_h} \\
\textbf{q} \in U_h}}
(||\underline{\bm{\chi}}-\underline{\bm{\xi}}||_{\mathbf{div}}+ 
||\textbf{p}-\textbf{q}||_1)
\]
where $\underline{\bm{\sigma}}$ is the other component of the solution
of~\eqref{eq:variationa_form_eig} corresponding to $\textbf{u}$ and
analogously $\underline{\bm{\xi}}$ is the other component corresponding to
$\textbf{p}$ for the dual problem.

Then the following error estimates hold true
$$\delta(E,E_h)\leq C\rho(h),$$
$$|\omega_j-\omega_{j,h}| \leq C\rho(h)\rho^*(h) \quad j = i,\dots, i+m-1,$$
where the gap $\delta$ is evaluated in the $H^1(\Omega)$ norm.
\end{theorem}
\begin{proof}

The estimate for the eigenspaces is a direct consequence of
Theorem~\ref{th:eigfun} and of the a priori estimates for the approximation of
the source problem~\eqref{eq:variationa_form_source}.
Indeed, since $T$ is the solution operator associated with $\textbf{u}$ (with $T
\textbf{f} = \Tlam \textbf{f}=\textbf{u}$), by the definition of the
operator norm and the standard energy estimates, and by considering the other
component $\underline{\bm{\sigma}}$ of the solution of~\eqref{eq:variationa_form_source},
we have
\begin{align*}
\delta(E,E_h)
&\leq C ||(T-T_h)_{|E}||_{\mathcal{L}(H^1)} \\
& = C \sup_{\substack{{\textbf{f}} \in E \\||\textbf{f}||_1=1 }} 
||(T-T_h)\textbf{f}||_1  \\
& \le C \sup_{\substack{{\textbf{u}} \in E \\||\textbf{u}||_1=1 }} 
||\textbf{u}-\textbf{u}_h||_1 \\
& \leq C \sup_{\substack{{\textbf{u}} \in E \\||\textbf{u}||_1=1 }}
 (||\textbf{u}-\textbf{u}_h||_1 
+ 
||\underline{\bm{\sigma}} -\underline{\bm{\sigma}}_h||_{\textbf{div}}) \\
& \leq C \sup_{\substack{{\textbf{u}} \in E \\||\textbf{u}||_1=1 }} 
\inf_{\substack{{\underline{\bm{\tau}}\in \Sigma_h} \\
\textbf{v} \in U_h}}
(||\textbf{u}-\textbf{v}||_1 
+ 
||\underline{\bm{\sigma}} - \underline{\bm{\tau}} ||_{\textbf{div}}) \\
& = C \rho(h).
\end{align*}

In order to show the estimate for the eigenvalues we are going to use the
results presented in Theorem~\ref{thm:bound_eigvals_with_T_T*}. The
correspondence between the abstract setting and our problem is given by the
following identifications
\[
\aligned
&\gX=\underline{\textbf{X}}_N\times H^1_{0,D}(\Omega)^d\\
&\gU=(\underline{\bm{\sigma}},\textbf{u})\\
&\gV=(\underline{\bm{\tau}},\textbf{v})\\
&\gA(\gU,\gV)=
(\mathbf{\mathcal{A}}\underline{\bm{\sigma}},\mathbf{\mathcal{A}}\underline{\bm{\tau}})
+
(\textbf{div}\underline{\bm{\sigma}},\textbf{div}\underline{\bm{\tau}})
-
(\mathbf{\mathcal{A}}\underline{\bm{\tau}},\underline{\bm{\epsilon}}(\textbf{u}))
-
(\mathbf{\mathcal{A}}\underline{\bm{\sigma}}, \underline{\bm{\epsilon}}(\textbf{v}))
+
(\underline{\bm{\epsilon}}(\textbf{u}),\underline{\bm{\epsilon}}(\textbf{v}))\\
&\gB(\gU,\gV)=-(\textbf{u},\textbf{div}\underline{\bm{\tau}}).
\endaligned
\]

With these identifications we are now in a position to give a representation
of the solution operator $\gT$ and of its adjoint $\gT^*$. It is interesting
to remark that the bilinear form $\gA$ is symmetric, so that the non symmetry
of the problem comes from the fact that the bilinear form on the right hand
side $\gB$ is not symmetric. For this reason, according to~\eqref{eq:sym}, the
adjoint operator will be constructed by keeping the same left hand side of our
equation and by considering the transpose of the right hand side.

More precisely, given $\gF=\gG=(\underline{\textbf{G}},\textbf{f})\in\gX$,
$\gT\gF$ will be given by $(\underline{\bm{\sigma}},\textbf{u})\in\gX$
solution of~\eqref{eq:variationa_form_source}, while $\gT^*\gG$ is given by
the solution $(\underline{\bm{\chi}},\textbf{p})\in\gX$ of the following dual
problem

\begin{equation}\label{eq:dual_problem_adjoint}
\begin{cases}
(\mathbf{\mathcal{A}}\underline{\bm{\chi}},\mathbf{\mathcal{A}}\underline{\bm{\xi}})
+
(\textbf{div}\underline{\bm{\chi}},\textbf{div}\underline{\bm{\xi}})
-
(\mathbf{\mathcal{A}}\underline{\bm{\xi}},\underline{\bm{\epsilon}}(\textbf{p}))
= 
\bm{0}
&\quad \forall \underline{\bm{\xi}} \in \underline{\bm{X}}_N, \\
-
(\mathbf{\mathcal{A}}\underline{\bm{\chi}}, \underline{\bm{\epsilon}}(\textbf{v}))
+
(\underline{\bm{\epsilon}}(\textbf{p}), 
\underline{\bm{\epsilon}}(\textbf{v}))
=
-(\textbf{div}\underline{\textbf{G}}, \textbf{v})
&\quad \forall\textbf{v} \in H^{1}_{0,D}(\Omega)^d.
\end{cases}
\end{equation}

The definition of the discrete operators $\gT_h$ and $\gT^*_h$ is completely
analogous and makes use of the discrete space $\gX_h=(\Sigma_h,U_h)$.

Theorem~\ref{thm:bound_eigvals_with_T_T*} bounds the error in the eigenvalues
with the sum of two terms.
The bound for the first term uses the same arguments as in the proof of
Theorem~\ref{thm:uniform_convergence}, together with the definitions of
$\rho(h)$ and $\rho^*(h)$. Let us focus on the second term.

We have to estimate the product of the following two quantities
\[
\aligned
&\|(\gT-\gT_h)_{|\gE}\|_{\mathcal{L}(\gX)}\\
&\|(\gT^*-\gT^*_h)_{|\gE^*}\|_{\mathcal{L}(\gX)}
\endaligned
\]
Actually, the result will follow by observing that the first term is bounded by
$C\rho(h)$ and the second one by $C\rho^*(h)$.

If $(\underline{\bm{\sigma}},\textbf{u})$ is an eigenfunction in $\gE$, then
the following a priori estimate is valid
\[
\|\underline{\bm{\sigma}}-\underline{\bm{\sigma_h}}\|_{\textbf{div}}+
\|\textbf{u}-\textbf{u}_h\|_1\le C\rho(h)\|\textbf{u}\|_0
\]
Since $\textbf{u}$ is an eigenfunction, its $L^2(\Omega)$-norm is bounded by
the $H^1(\Omega)$-norm and we have that $\|\textbf{u}\|_1\le\|\gU\|_{\gX}$, so
that we can conclude the desired result
\[
\|(\gT-\gT_h)_{|\gE}\|_{\mathcal{L}(\gX)}\le C\rho(h).
\]

The estimate for the adjoint operator is performed analogously observing that,
if $(\underline{\bm{\chi}},\textbf{p})$ is an eigenfunction in $\gE^*$, then the
following a priori bound holds true
\[
\|\underline{\bm{\chi}}-\underline{\bm{\chi_h}}\|_{\textbf{div}}+
\|\textbf{p}-\textbf{p}_h\|_1\le C\rho^*(h)\|\textbf{div}\underline{\bm{\chi}}\|_0
\]
By the standard relations between the eigenspaces $\gE$ and $\gE^*$ and the
fact that $\underline{\bm{\chi}}$ is an eigenfunction of the dual problem we
can finally conclude that
\[
\|(\gT^*-\gT^*_h)_{|\gE^*}\|_{\mathcal{L}(\gX)}\le C\rho^*(h).
\]

\end{proof}

\begin{remark}
A natural question is whether the results of the previous theorem guarantee
the double order of convergence for the approximation of the eigenvalues. The
double order of convergence would follow if we can show that $\rho^*(h)$ is
asymptotically equivalent to $\rho(h)$.
This is a tricky question because it involves the regularity of the dual
problem.  In general, as already mentioned in Remark~\ref{re:duality}, we
cannot expect from the solution of the dual problem the same regularity as for
the original one. On the other hand, we do not need the regularity of a
generic solution, but only of the solution corresponding to an eigenfunction.
At the moment, only partial results in this direction are available; in
particular, we can prove that the dual solution of the FOSLS approximation of
the Poisson equation has the same regularity as the original one when an
eigenfunction is considered~\cite{FB_eig}. The same result can be proved for
some DPG formulations~\cite{DPG}. On the other hand, the numerical results of
the next section confirm that in our examples the double order of convergence
is achieved.
\end{remark}

\section{Numerical Results}
\label{se:num}

In this section we present several numerical results related to the
formulation that we have described.
Some preliminary results were already present in~\cite{FB_LE}. However in that
case, for simplicity, a special constitutive law was considered so that the
problem was equivalent to a Stokes system.
Now we solve the true linear elasticity problem and we are interested in showing
how the reported numerical results are robust with respect to the Lam\'e
constants, in particular when tending to the incompressible limit. The
behavior of the computed spectrum will give us useful information on some
properties of the solution operator.

We are not focusing on the efficiency of the numerical solver, but we are
mainly interested in representing the computed spectrum in the complex plane
as a function of the mesh and of the Lam\'e constants.

To this end, we consider a $2\text{D}$ linear elasticity problem with 
homogeneous Dirichlet boundary conditions, where the compliance tensor is given by
\begin{equation}
\mathcal{A}\underline{\bm{\tau}} =
\frac{1}{2\mu}(\underline{\bm{\tau}} -
\frac{\lambda}{2\lambda + 
2\mu}\text{tr}(\underline{\bm\tau})\underline{\mathbf{I}}).	
\end{equation}
In all our tests the Lam\'e constants are $\mu=1$ and $\lambda$ varying from
$1$, $100$, $10^4$ to $ 10^8$.
Moreover, different mesh structures are studied and investigated. 

We start by presenting the rate of convergence for the first and second eigenvalues on different meshes.
Then, the spread of eigenvalues in the complex plane is explored for each mesh as it is refined.

In order to avoid artifacts introduced by the fact that we are rewriting the
elasticity equation as a first order system, we considered only \emph{genuine}
eigenvalues of~\eqref{eq:variationa_form_eig} in the sense that they
correspond to eigenfunctions for which the displacement is not zero. These
eigenvalues are related to the Schur complement described in~\eqref{eq:Shur_u}.

A first order scheme based on Raviart--Thomas elements $(RT_0)$ for the 
approximated space $\Sigma_h$ is considered with continuous Galerkin of degree one for the space $U_h$, 
with $h$ being the mesh size or step size.

These finite element spaces satisfy the conditions of Theorem \ref{thm:uniform_convergence}.
The general matrices in System~\ref{eq:discte_matrix_structure} were
constructed with FEniCS~\cite{fenics} and then matrices corresponding to the
Schur complement in~\ref{eq:Schur_complement} were extracted and used to solve the eigenvalue problem.
\subsection{Rate of convergence on the square domain} 
\label{sub:rate_of_convergence_square}
In order to confirm the convergence rate stated in Section \ref{se:convergence
analysis}, we first consider a square domain $\Omega = ]0,1[^2$ and three different
mesh sequences.
\begin{figure}[t!]
\centering
\begin{subfigure}[b]{0.3\textwidth}
\centering
\includegraphics[width=1.0\textwidth]{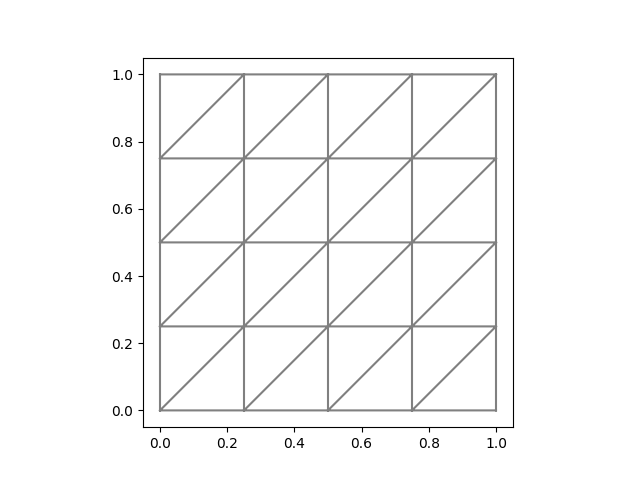}
\caption{Right}
\label{fig:right_square}
\end{subfigure}
\hfill
\begin{subfigure}[b]{0.3\textwidth}
\centering
\includegraphics[width=1.0\textwidth]{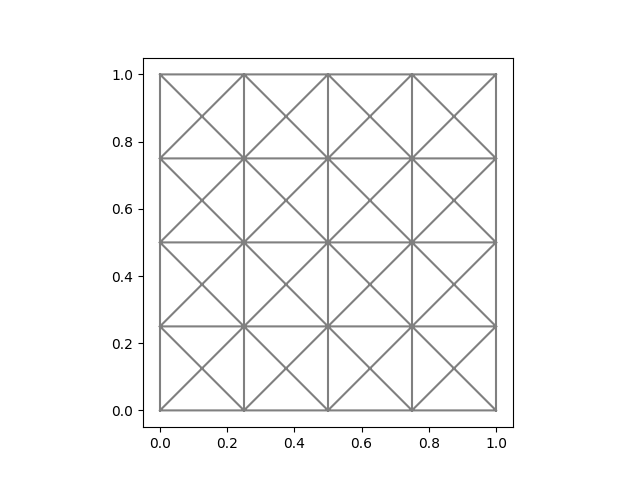}
\caption{Crossed}
\label{fig:crossed_square}
\end{subfigure}
\hfill
\begin{subfigure}[b]{0.3\textwidth}
\centering
\includegraphics[width=1.0\textwidth]{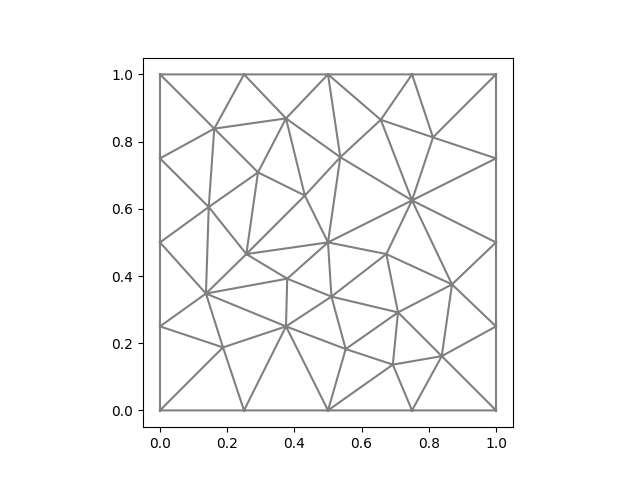}
\caption{Nonuniform}
\label{fig:nonuniform_square}
\end{subfigure}
\caption{Meshes on the unit square with N = 4}
\label{fig:square_meshes}
\end{figure}
We use the following meshes: a non-symmetric uniform mesh (Right), a symmetric
uniform mesh (Crossed), and a non-symmetric and non-structured mesh
(Nonuniform). 
The integer $N$ refers to the number of subdivisions on each side of the square. 
An example of such meshes is given in Figure \ref{fig:square_meshes}.
\begin{table}[t!]
\centering
\begin{tabular}{ c|c|c|c|c }

&  $\lambda = 1$ 		& $\lambda = 100$   & $\lambda = 10^4$ & $\lambda = 10^8$ \\ \hline
$\omega_1$  & 37.266072200953786    & 52.31315105053875 & 52.3443693  	   & 52.344691 \\ \hline
$\omega_2$  & 37.2660721997643 		& 91.4778227239564  &92.11827609964527 & 92.12439336305897 \\
\end{tabular}
\caption{Estimates of the first and second eigenvalues on a square} 
\label{tab:estimates_of_eigenvalues_on_square}
\end{table}
We computed with FEniCS the reference solutions providing an estimate of the first and
second eigenvalues for the different cases we want to approximate. We used a
high order scheme and an adaptive algorithm with the classical
\uppercase{Solve-Estimate-Mark-Refine} strategy.
The values of the reference eigenvalues are reported in Table \ref{tab:estimates_of_eigenvalues_on_square}.
\begin{figure}[t!]
\centering
\begin{subfigure}[b]{0.3\textwidth}
\centering
\includegraphics[width=\textwidth]{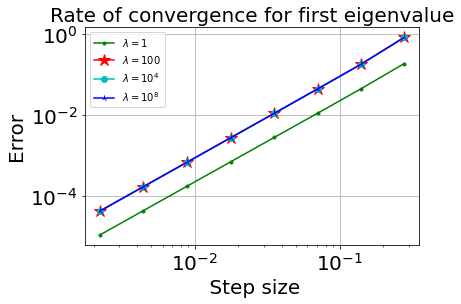}
\caption{Right mesh}
\end{subfigure}
\hfill
\begin{subfigure}[b]{0.3\textwidth}
\centering
\includegraphics[width=\textwidth]{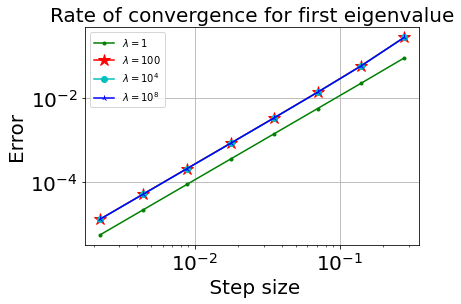}
\caption{Crossed mesh}
\end{subfigure}
\hfill
\begin{subfigure}[b]{0.3\textwidth}
\centering
\includegraphics[width=\textwidth]{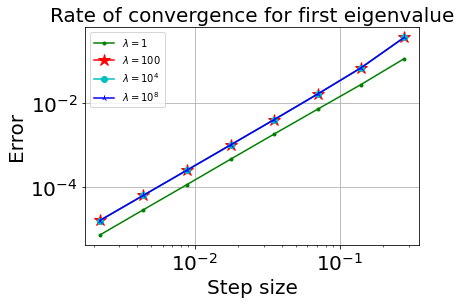}
\caption{Nonuniform mesh}
\end{subfigure}
\vfill
\centering
\begin{subfigure}[b]{0.3\textwidth}
\centering
\includegraphics[width=\textwidth]{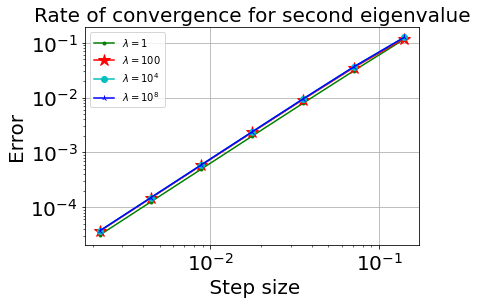}
\caption{Right mesh}
\end{subfigure}
\hfill
\begin{subfigure}[b]{0.3\textwidth}
\centering
\includegraphics[width=\textwidth]{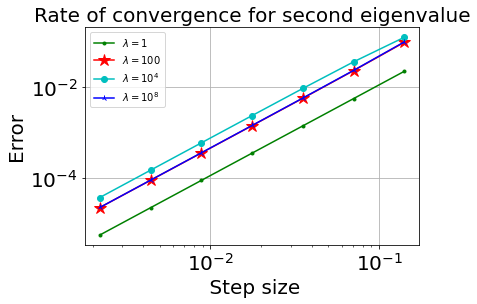}
\caption{Crossed mesh}
\end{subfigure}
\hfill
\begin{subfigure}[b]{0.3\textwidth}
\centering
\includegraphics[width=\textwidth]{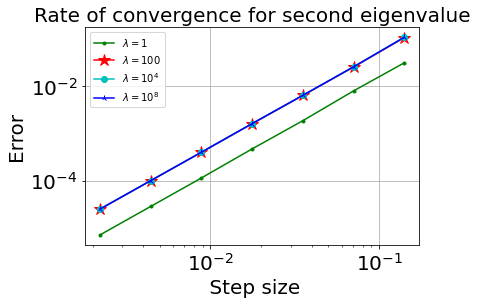}
\caption{Nonuniform mesh}
\end{subfigure}
\caption{Rate of convergence for the first and second eigenvalues on squared domain}
\label{fig:rate_of_conv_first_eig_square}
\end{figure}\\
Figure \ref{fig:rate_of_conv_first_eig_square} shows the rate of convergence 
for both the first and the second approximated eigenvalues.
Clearly both eigenvalues match the theoretical result in Theorem \ref{thm:rate_of_converg} with the rate being of second order on all meshes provided. 
\subsection{Rate of convergence on the L-shaped domain} 
\label{sub:rate_of_convergence_lshape}
The second domain that is considered is the L-shaped domain obtained by
removing a quarter of the square $]0,1[^2$.
It is known that on such domain singularities due to the re-entrant corner may
occur.
We also consider three sequences of triangulations: (Left), (Crossed or
Uniform), and (Nonuniform) meshes.
These are illustrated in Figure \ref{fig:lshape_mesh}  with $N = 4$.
\begin{figure}[t!]
\centering
\begin{subfigure}[b]{0.3\textwidth}
\centering
\includegraphics[width=\textwidth]{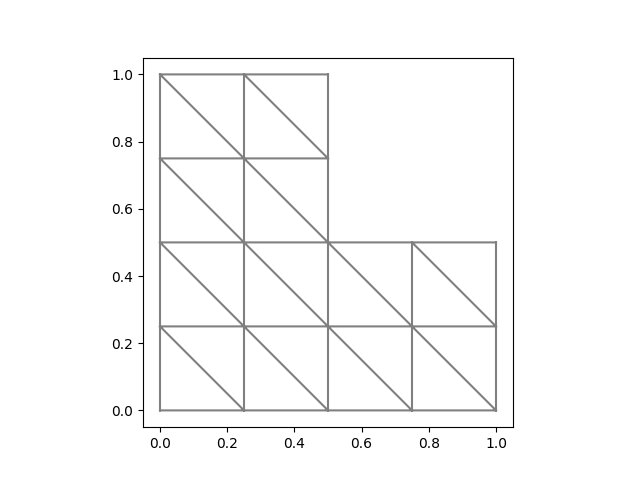}
\caption{Left}
\label{fig:lshape_left}
\end{subfigure}
\hfill
\centering
\begin{subfigure}[b]{0.3\textwidth}
\centering
\includegraphics[width=\textwidth]{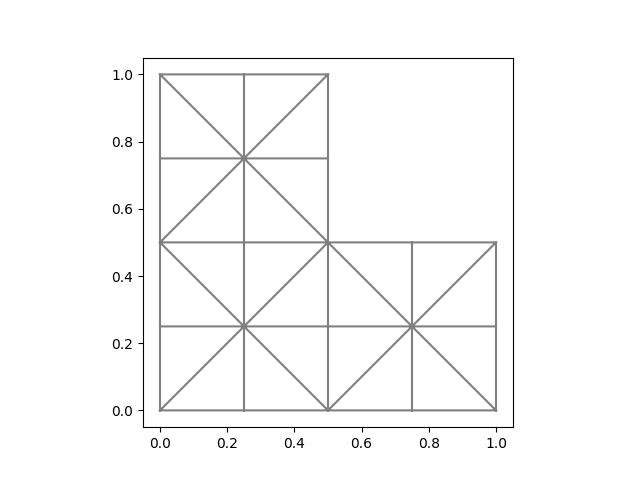}
\caption{Uniform}
\label{fig:lshaped_uniform}
\end{subfigure}
\hfill
\begin{subfigure}[b]{0.3\textwidth}
\centering
\includegraphics[width=\textwidth]{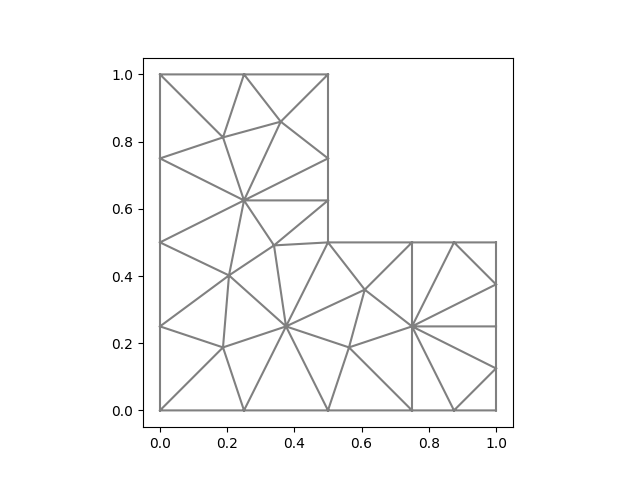}
\caption{Nonuniform}
\label{fig:lshaped_Non_uniform}
\end{subfigure}
\caption{Meshes for L-shaped domain with $N=4$} 
\label{fig:lshape_mesh}
\end{figure}
In order to compare the approximated eigenvalues with a reference value, the
same adaptive code, as in the previous case, was used to find the estimate for the first and second 
eigenvalues for different $\lambda$.
\begin{table}[t!]
\centering
\begin{tabular}{ c|c|c|c|c }

&  $\lambda = 1$ 		& $\lambda = 100$     & $\lambda = 10^4$   & $\lambda = 10^8$ \\ \hline
$\omega_1$  & 54.36578831544661     & 127.990463	      & 128.52363885700083 & 128.5293816640767 \\ \hline
$\omega_2$  & 69.08352886532845 	& 147.44431322194393  &148.06735898422232  & 148.07344440728022 \\
\end{tabular}
\caption{Estimates of the first and second eigenvalues on an L-shaped domain} 
\label{tab:estimates_of_eigenvalues_on_L-shaped}
\end{table}
These estimated values are reported in Table \ref{tab:estimates_of_eigenvalues_on_L-shaped}.
\begin{figure}[ht!]
\begin{subfigure}[b]{0.3\textwidth}
\centering
\includegraphics[width=\textwidth]{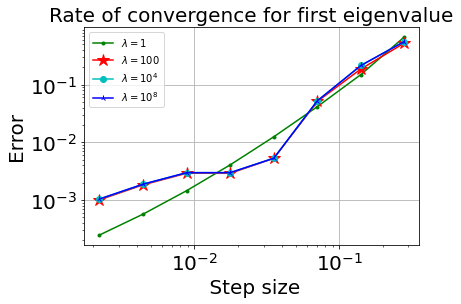}
\caption{Left mesh}
\label{fig:left_lshape_mesh_first_eig_rate}
\end{subfigure}
\hfill
\begin{subfigure}[b]{0.3\textwidth}
\centering
\includegraphics[width=\textwidth]{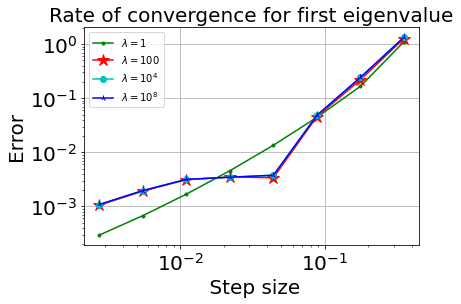}
\caption{Uniform mesh}
\label{fig:uniform_lshape_mesh_first_eig_rate}
\end{subfigure}
\hfill
\begin{subfigure}[b]{0.3\textwidth}
\centering
\includegraphics[width=\textwidth]{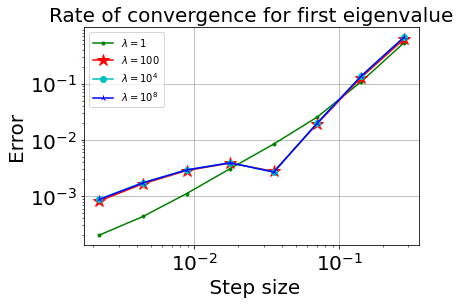}
\caption{Nonuniform mesh}
\label{fig:Nonumiform_lshape_mesh_first_eig_rate}
\end{subfigure}
\vfill
\centering
\begin{subfigure}[b]{0.3\textwidth}
\centering
\includegraphics[width=\textwidth]{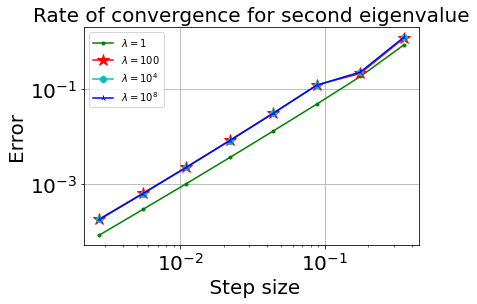}
\caption{Left mesh}
\label{fig:left_lshape_mesh_second_eig_rate}
\end{subfigure}
\hfill
\begin{subfigure}[b]{0.3\textwidth}
\centering
\includegraphics[width=\textwidth]{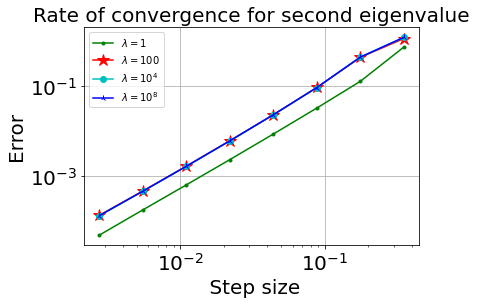}
\caption{Uniform mesh}
\label{fig:Uniform_lshape_mesh_second_eig_rate}
\end{subfigure}
\hfill
\begin{subfigure}[b]{0.3\textwidth}
\centering
\includegraphics[width=\textwidth]{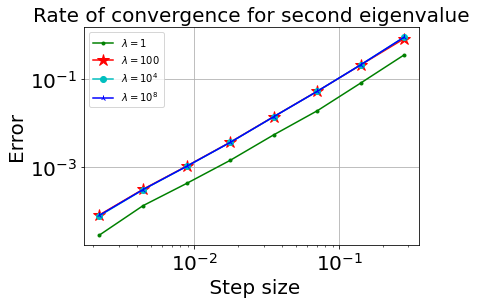}
\caption{Nonuniform mesh}
\label{fig:Nonumiform_lshape_mesh_second_eig_rate}
\end{subfigure}
\caption{Rate of convergence for the first and second eigenvalues L-shaped domain}
\label{fig:rate_of_conv_first_and_second_eig_Lshaped}
\end{figure}
It is well known that the rate of convergence is reduced when the
eigenfunction is singular. This is expected in particular in the case of the
first eigenvalue.

When looking at the computed results, a particular phenomenon is present when
approximating the first eigenvalue close to the incompressible limit.
This is clearly seen in Figures~\ref{fig:left_lshape_mesh_first_eig_rate},
\ref{fig:uniform_lshape_mesh_first_eig_rate},
and~\ref{fig:Nonumiform_lshape_mesh_first_eig_rate} for all meshes. Actually,
the expected rate of convergence is achieved but there is a strange
pre-asymptotic behavior when the value of $\lambda$ increases.
By looking at the computed eigenfunctions it looks like this phenomenon is
related to a sort of locking caused by the location of the degrees of freedom
in the proximity to the re-entered corner.
\begin{figure}[t!]
\begin{subfigure}{0.45\textwidth}
\centering
\includegraphics[width=35mm]{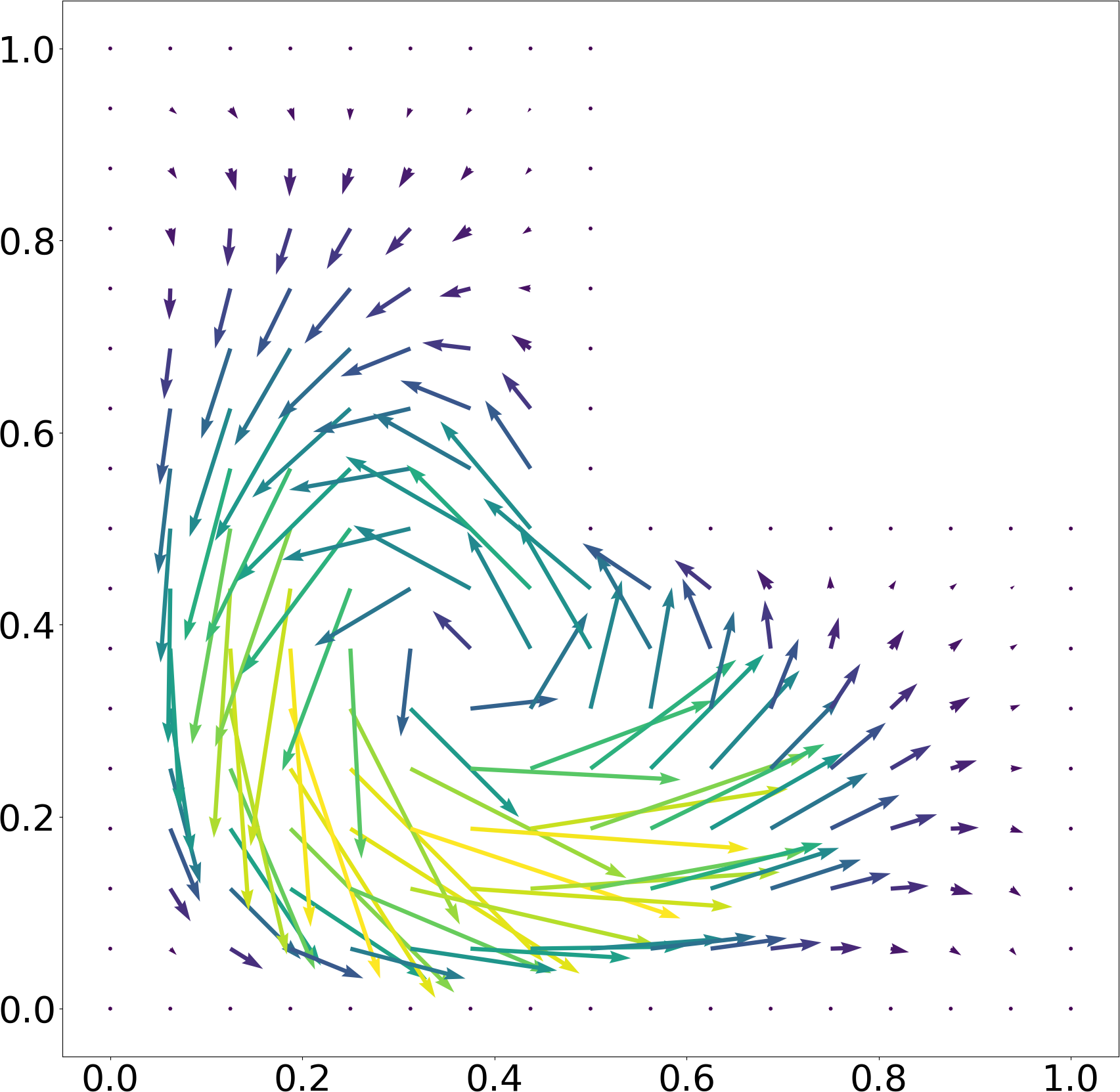}
\caption{Left mesh with $N=16$}
\label{fig:left_lshape_mesh_iteration3}
\end{subfigure}
\hfill
\begin{subfigure}{0.45\textwidth}
\centering
\includegraphics[width=35mm]{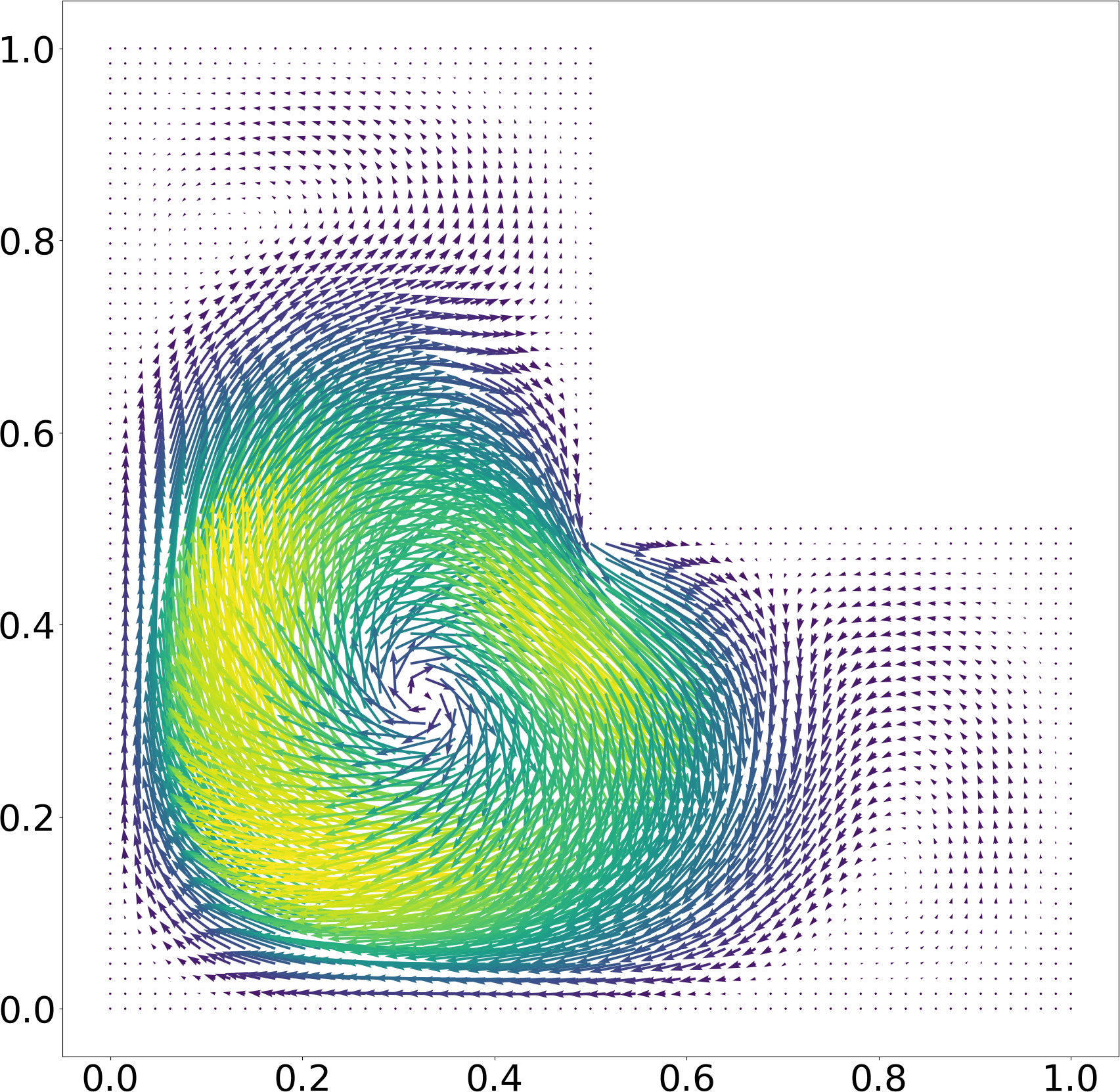}
\caption{Left mesh with $N=64$}
\label{fig:left_lshape_mesh_iteration5}
\end{subfigure}
\vfill
\begin{subfigure}{0.45\textwidth}
\centering
\includegraphics[width=35mm]{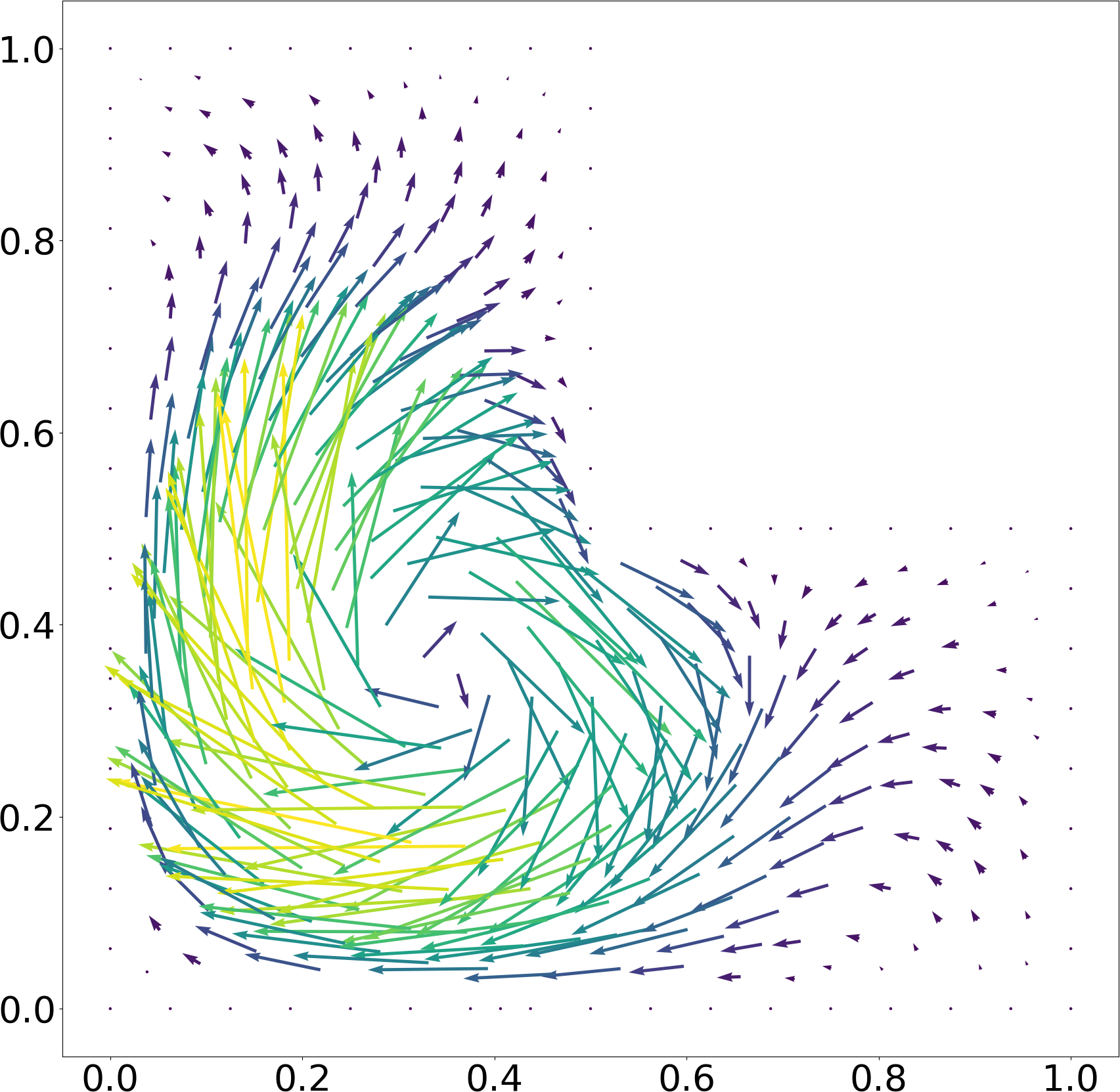}
\caption{Nonuniform mesh with $N=16$}
\label{fig:Nonniform_lshape_mesh_iteration3}
\end{subfigure}
\hfill
\begin{subfigure}{0.45\textwidth}
\centering
\includegraphics[width=35mm]{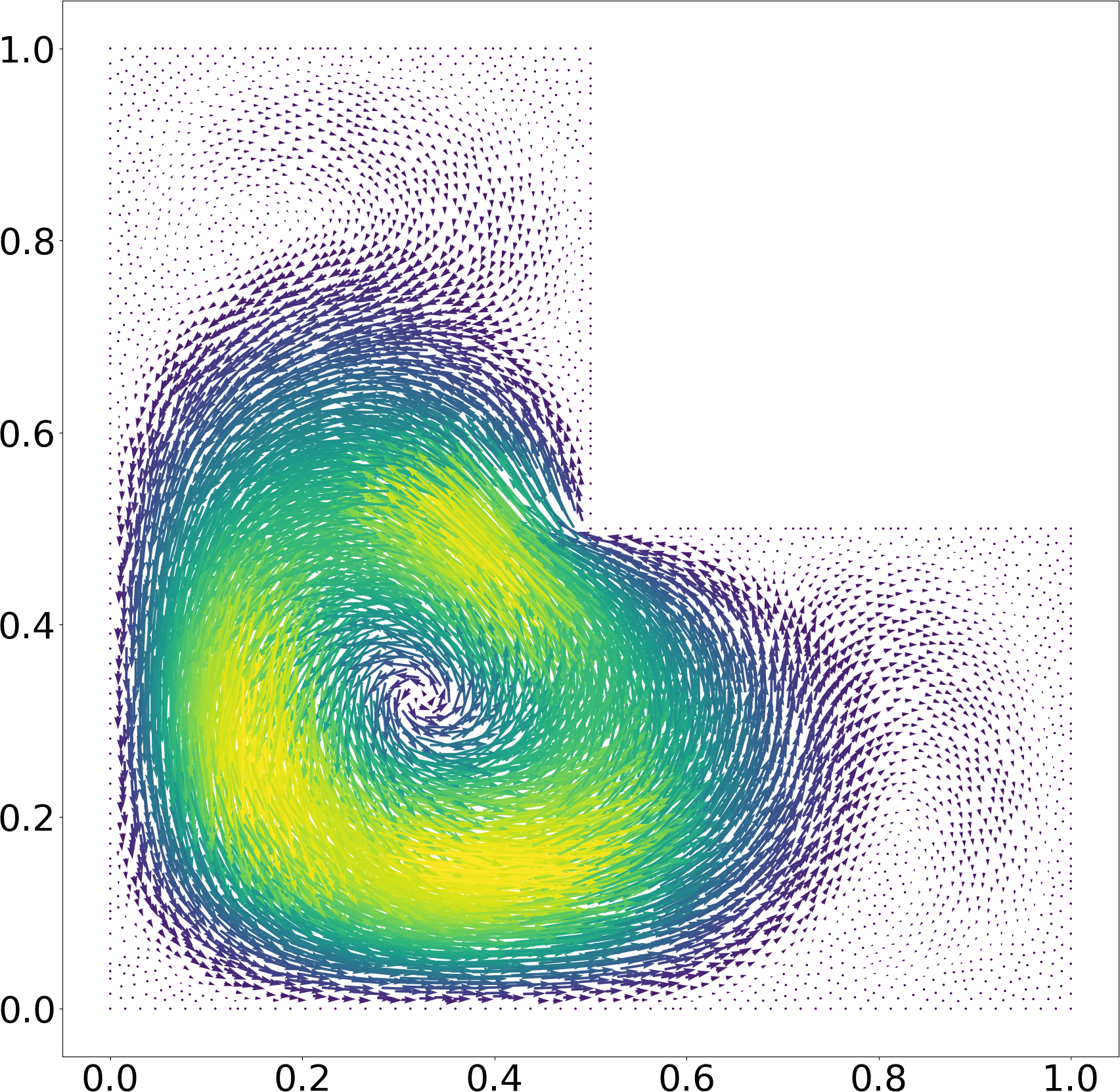}
\caption{Nonuniform mesh with $N=64$}
\label{fig:Nonuniform_lshape_mesh_iteration5}
\end{subfigure}
\caption{First eigenfunction for $\lambda=100$ on L-shaped domain}
\label{fig:first_eigenfunction_Lshaped_left_uniform}
\end{figure}
When the system approaches the incompressible limit, the displacement presents
a recirculation close to the re-entrant corner. A coarse mesh is not capable
to represent such vortex. For $\lambda=1$ this situation doesn't occur.
In order to better describe this phenomenon, we report in
Figures~\ref{fig:left_lshape_mesh_iteration3}
and~\ref{fig:Nonniform_lshape_mesh_iteration3} the eigenfunction corresponding
to the first eigenvalue on the mesh after three refinements for $\lambda=100$.
We see that the correct vortex is not well approximated, possibly because of
the mesh being coarse so that there are not enough degrees of freedom close to
the re-entrant corner.
As we refine, specifically starting from the fifth iteration and onwards,
the expected circulation is noticed as
Figures~\ref{fig:left_lshape_mesh_iteration5}
and~\ref{fig:Nonuniform_lshape_mesh_iteration5} illustrate.
This might be the reason why we need a fine mesh to observe the expected rate
for the first eigenvalue.
\begin{figure}[t!]
\includegraphics[width=35 mm]{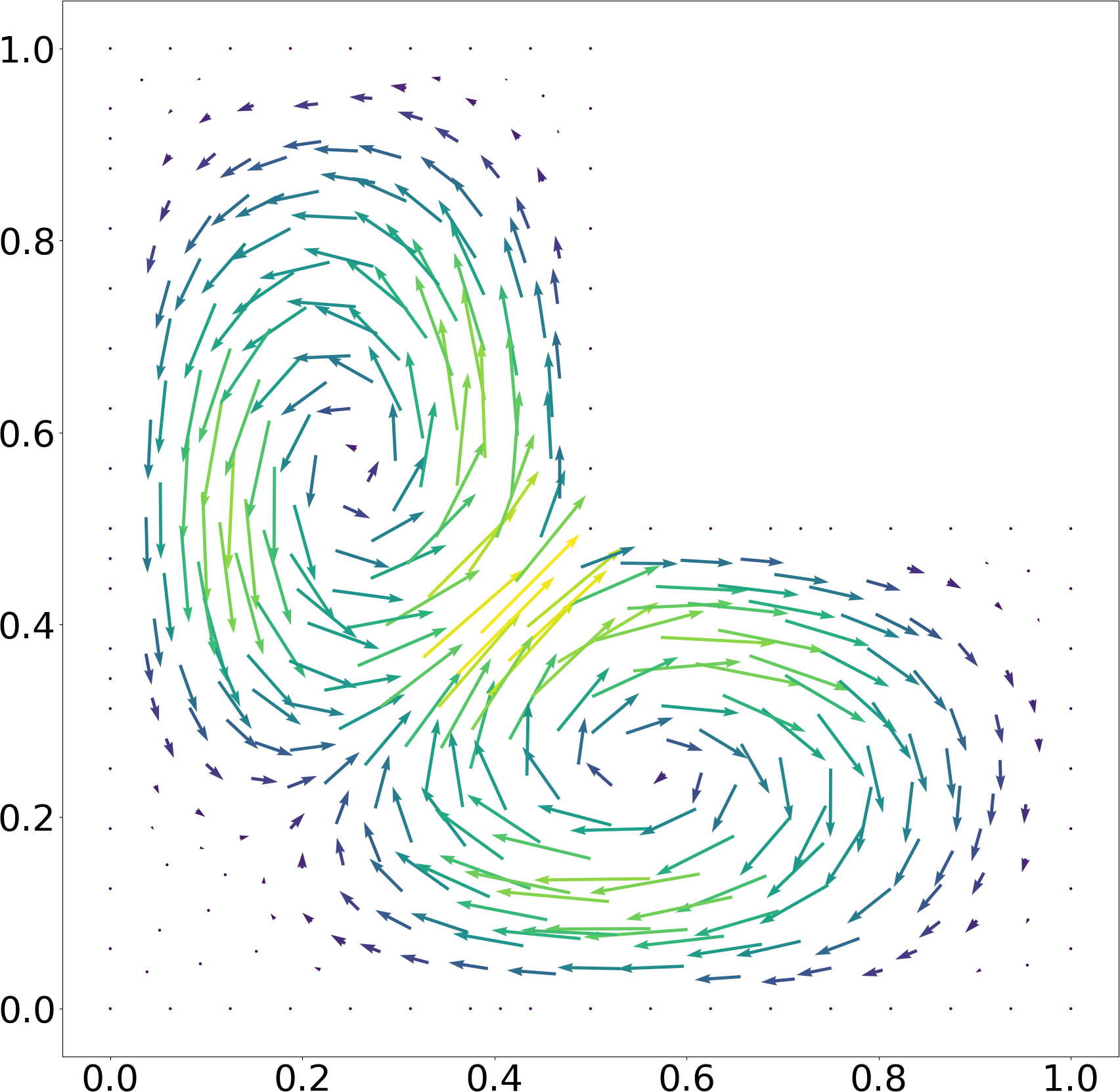}
\caption{Second eigenfunction for $\lambda=100$ on Nonuniform L-shaped domain}
\label{fig:second_eig_Nonuniform_lshape_lam100}
\end{figure}
On the contrary, the re-entrant corner has no negative effects on the second eigenvalue even for coarse meshes. This can be seen, for example, in 
Figure~\ref{fig:second_eig_Nonuniform_lshape_lam100} which illustrates the second eigenfunction on a Nonuniform mesh with $N=16$ and $\lambda=100$.
%
\subsection{The distribution of the eigenvalues in the complex plane} 
\label{sub:the_eigenvalues_spread_in_the_complex_plain}
%
In this section we discuss the distribution of the spectrum of the discrete
operator in the complex plane. This is the main motivation of our paper and
provide us with some properties of the solution operator that are not visible
when considering the source problem. Needless to say, the knowledge of the
spectrum of the operator has important consequences for the design of
numerical schemes, for instance when a transient problem is approximated.

We consider the distribution of the eigenvalues in the case of the two domains
previously considered (square and L-shaped) and the same range of Lam\'e
constants.

We note that we are taking specific outer zoom with the values on the axes being large in
order to show the spread of a large portion of the spectrum.
Moreover, different scales are considered depending on each case in order to
better highlight the behavior of the specific approximation.
\begin{figure}[t!]
\centering
\begin{subfigure}[b]{0.4\textwidth}
\centering
\includegraphics[width=\textwidth]{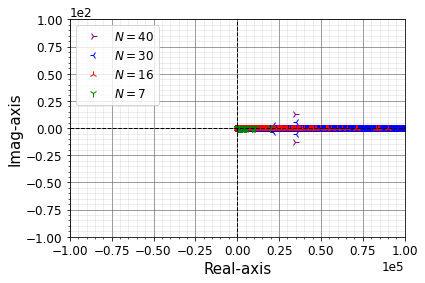}
\caption{$\lambda = 1$}
\label{fig:lam1_right_refine}
\end{subfigure}
\hfill
\begin{subfigure}[b]{0.4\textwidth}
\centering
\includegraphics[width=\textwidth]{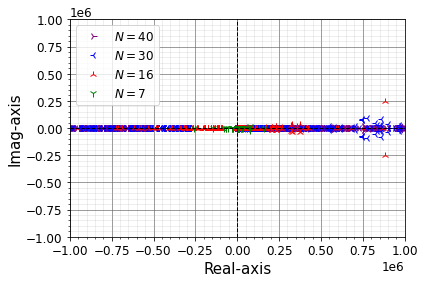}
\caption{$\lambda = 100$}
\label{fig:lam100_right_refine}
\end{subfigure}
\hfill
\begin{subfigure}[b]{0.4\textwidth}
\centering
\includegraphics[width=\textwidth]{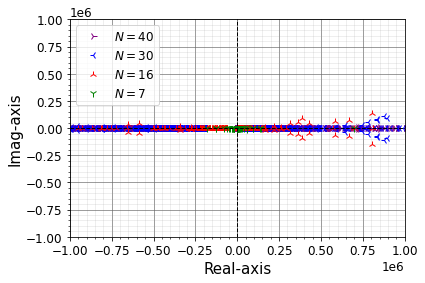}
\caption{$\lambda = 10^4$}
\label{fig:lam10pow4_right_refine}
\end{subfigure}
\hfill
\begin{subfigure}[b]{0.4\textwidth}
\centering
\includegraphics[width=\textwidth]{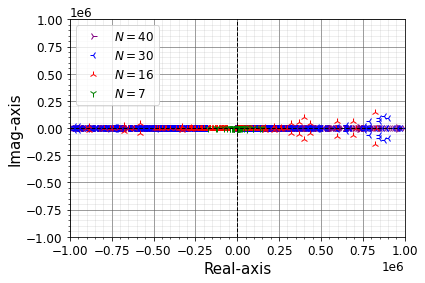}
\caption{$\lambda = 10^8$}
\label{fig:lam10pow8_right_refine}
\end{subfigure}
\caption{Eigenvalues of a square on a refined Right mesh} 
\label{fig:right_refined}
\end{figure}

We recall that the exact eigenvalues are real positive, so that we would
expect that the discrete eigenvalues, for $h$ small enough, distribute along
the positive real axis of the complex plane. This is actually a naive
interpretation of the convergence results presented in the previous sections.
Indeed, by inspecting carefully the result presented in
Property~\ref{def:convergence_of_eigenvalues}, it can be seen that computed
eigenvalues can also be spread in different regions of the complex plane, as
far as they lie outside a ball centered at the origin. The larger the ball,
the smaller in general should $h$ be in order to detect this behavior.

Figure~\ref{fig:right_refined} shows the spectrum of our operator on the Right
mesh for different values of $\lambda$. Each plot reports with different
colors the results computed on four successfully refined meshes.

As shown in Figure \ref{fig:lam1_right_refine}, we observe that for a small 
value of $\lambda$, the spectrum is \emph{well behaving} close to the positive
real axis with some non real eigenvalues appearing as we refine. 
In this case, all values are located in the right half of the complex plane,
that is, they have positive real part.

When $\lambda$ increases, we can see from Figure~\ref{fig:right_refined} that
eigenvalues with negative real part show up. Also in this case some non real
eigenvalues are present, even if their imaginary part is not as large as we
will see in the next examples.
\begin{figure}[t!]
\centering
\begin{subfigure}[b]{0.4\textwidth}
\centering
\includegraphics[width=\textwidth]{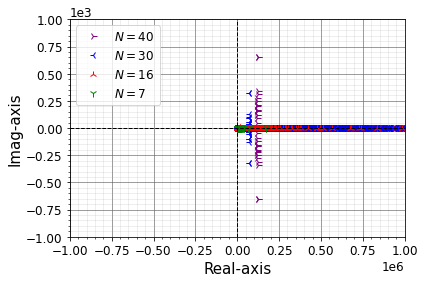}
\caption{$\lambda = 1$}
\label{fig:lam1_crossed_refine}
\end{subfigure}
\hfill
\begin{subfigure}[b]{0.4\textwidth}
\centering
\includegraphics[width=\textwidth]{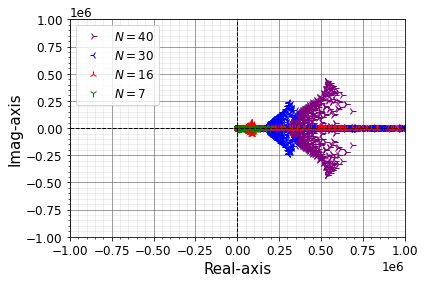}
\caption{$\lambda = 100$}
\label{fig:lam100_crossed_refine}
\end{subfigure}
\hfill
\begin{subfigure}[b]{0.4\textwidth}
\centering
\includegraphics[width=\textwidth]{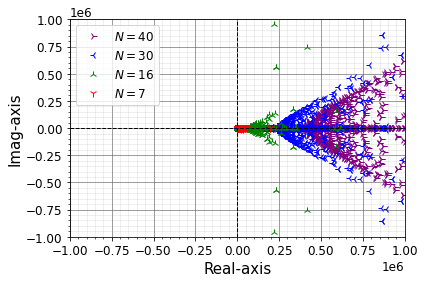}
\caption{$\lambda = 10^4$}
\label{fig:lam10pow4_crossed_refine}
\end{subfigure}
\hfill
\begin{subfigure}[b]{0.4\textwidth}
\centering
\includegraphics[width=\textwidth]{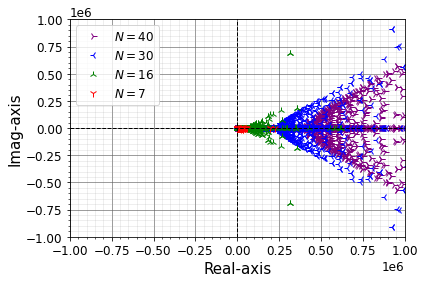}
\caption{$\lambda = 10^8$}
\label{fig:lam10pow8_crossed_refine}
\end{subfigure}
\caption{Eigenvalues of a square on a refined Crossed mesh} 
\label{fig:crossed_refined}
\end{figure}

The results for the Crossed mesh are reported in Figure~\ref{fig:crossed_refined}.
In this case, all eigenvalues have positive real part and are located to the
right side of the complex plane.
The more we refine, the more the scheme produces complex eigenvalues 
moving away from the region of interest and diverging. 
For small values of $\lambda$ (Figure \ref{fig:lam1_crossed_refine}), most eigenvalues are positive and real with some complex being present for finer meshes.
As $\lambda$ becomes larger in Figure \ref{fig:crossed_refined}, the spectrum
is spread more and more with complex eigenvalues having positive real part.
\begin{figure}[t!]
\centering
\begin{subfigure}[b]{0.4\textwidth}
\centering
\includegraphics[width=\textwidth]{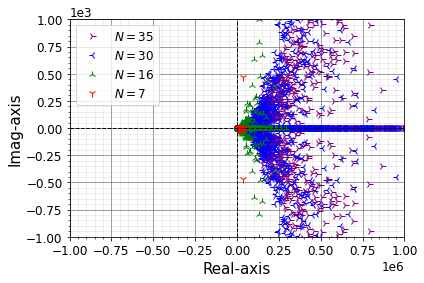}
\caption{$\lambda = 1$}
\label{fig:lam1_unstr_refine}
\end{subfigure}
\hfill
\begin{subfigure}[b]{0.4\textwidth}
\centering
\includegraphics[width=\textwidth]{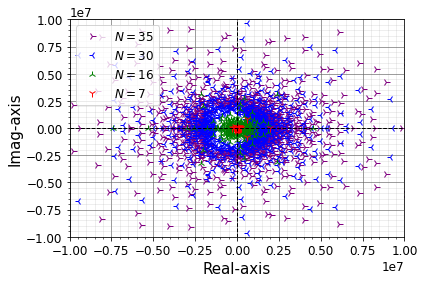}
\caption{$\lambda = 100$}
\label{fig:lam100_unstr_refine}
\end{subfigure}
\hfill
\begin{subfigure}[b]{0.4\textwidth}
\centering
\includegraphics[width=\textwidth]{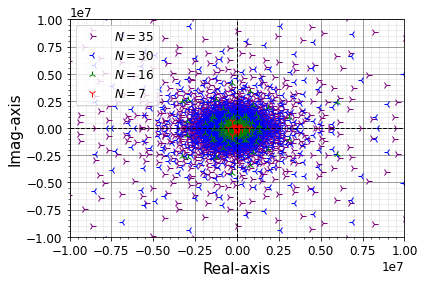}
\caption{$\lambda = 10^4$}
\label{fig:lam10pow4_unstr_refine}
\end{subfigure}
\hfill
\begin{subfigure}[b]{0.4\textwidth}
\centering
\includegraphics[width=\textwidth]{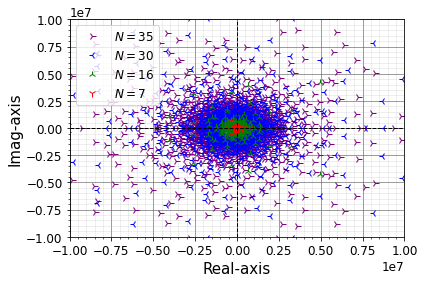}
\caption{$\lambda = 10^8$}
\label{fig:lam10pow8_unstr_refine}
\end{subfigure}
\caption{Eigenvalues of a square on a refined Nonuniform mesh} 
\label{fig:unstr_refined}
\end{figure}

Finally, the spectrum computed with the Nonuniform mesh has a different
structure. The eigenvalues appear to be more spread all over the complex 
plane as Figure \ref{fig:unstr_refined} shows.
As in the previous observations, the values of the eigenvalues, for
$\lambda=1$, are concentrated to the right of the complex plane.
However, in this case, the eigenvalues are scattered much more when comparing
them to the Right and Crossed meshes for $\lambda=1$.
The spectrum gets surprisingly more symmetric with respect to the origin when
moving towards the incompressible limit.

\begin{figure}[t!]
\centering
\begin{subfigure}[b]{0.3\textwidth}
\centering
\includegraphics[width=\textwidth]{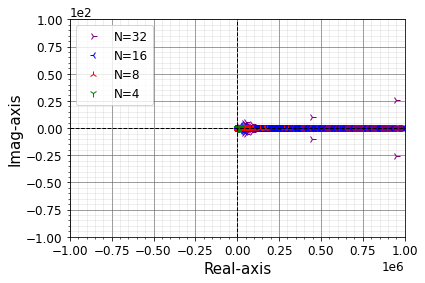}
\caption{$\lambda = 1$}
\end{subfigure}
\hfill
\begin{subfigure}[b]{0.3\textwidth}
\centering
\includegraphics[width=\textwidth]{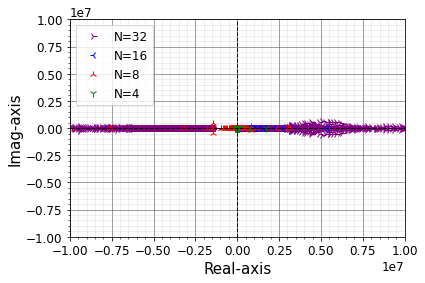}
\caption{$\lambda = 100$}
\end{subfigure}
\hfill
\begin{subfigure}[b]{0.3\textwidth}
\centering
\includegraphics[width=\textwidth]{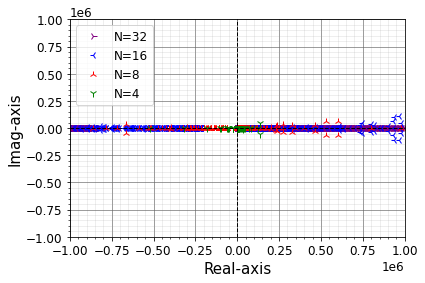}
\caption{$\lambda = 10^8$}
\end{subfigure}
\caption{Eigenvalues of a Left L-shaped domain} 
\label{fig:left_lshaped_different_lam}
\end{figure}

In what follows we present some results for the L-shaped domain. We only present the cases where $\lambda=10^r$ for ($r = 0,2,8$) since there are no significant differences between $r=4$ and $8$.

We start by exploring the Left mesh structure in Figure \ref{fig:left_lshaped_different_lam}.
Looking at the spectrum in this case, we see a similar behavior as for the
case of the Right mesh on the square.
\begin{figure}[t!]
\centering
\begin{subfigure}[b]{0.3\textwidth}
\centering
\includegraphics[width=\textwidth]{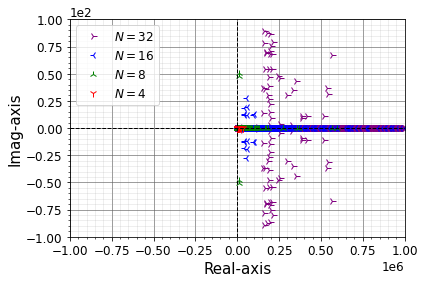}
\caption{$\lambda = 1$}
\end{subfigure}
\hfill
\begin{subfigure}[b]{0.3\textwidth}
\centering
\includegraphics[width=\textwidth]{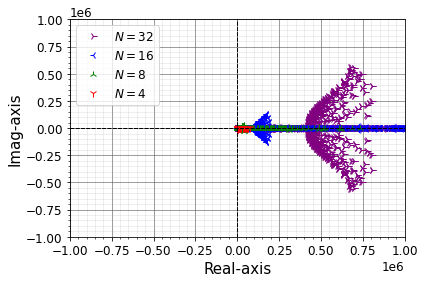}
\caption{$\lambda = 100$}
\end{subfigure}
\hfill
\hfill
\begin{subfigure}[b]{0.3\textwidth}
\centering
\includegraphics[width=\textwidth]{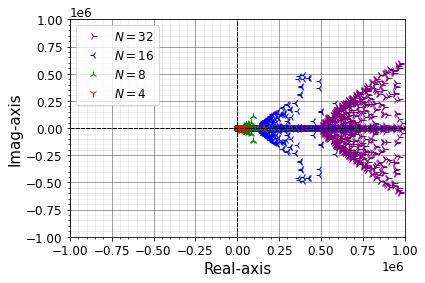}
\caption{$\lambda = 10^8$}
\end{subfigure}
\caption{Eigenvalues of a Uniform L-shaped domain} 
\label{fig:uniform_lshaped_different_lam}
\end{figure}%
\begin{figure}[t!]
\centering
\begin{subfigure}[b]{0.3\textwidth}
\centering
\includegraphics[width=\textwidth]{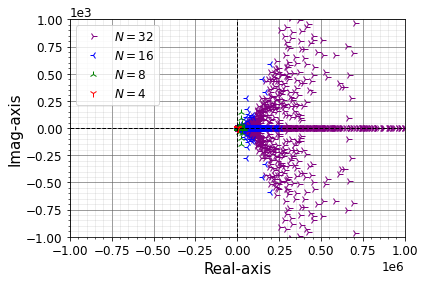}
\caption{$\lambda = 1$}
\end{subfigure}
\hfill
\begin{subfigure}[b]{0.3\textwidth}
\centering
\includegraphics[width=\textwidth]{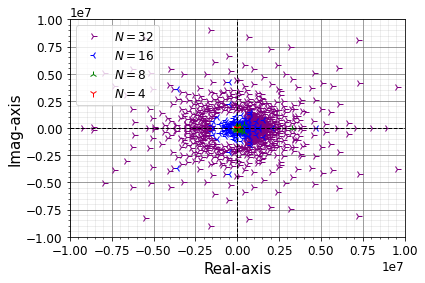}
\caption{$\lambda = 100$}
\end{subfigure}
\hfill
\begin{subfigure}[b]{0.3\textwidth}
\centering
\includegraphics[width=\textwidth]{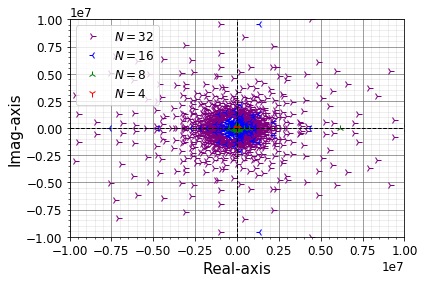}
\caption{$\lambda = 10^8$}
\end{subfigure}
\caption{Eigenvalues of Nonuniform L-shaped domain} 
\label{fig:nonuniformlshaped_different_lam}
\end{figure}%

For Uniform meshes, as Figure 
\ref{fig:uniform_lshaped_different_lam} shows, occurrence of
complex eigenvalues for $\lambda=1$ are more present than in the previous case.
Moreover, complex eigenvalues appear for higher values of $\lambda$ 
and grow when refining. The significant eigenvalues are those close to the origin
which are positive and real or close to being real in the limit. 

The Nonuniform mesh shows again a similar picturesque behavior as in the case
of the square domain.
Figure \ref{fig:nonuniformlshaped_different_lam} shows the distribution of
eigenvalues for this case.
Even for small $\lambda$ there are eigenvalues with large imaginary part in
this case, although with positive real part.

\begin{figure}[t!]
\begin{subfigure}[b]{0.45\textwidth}
\centering
\includegraphics[width=7cm]{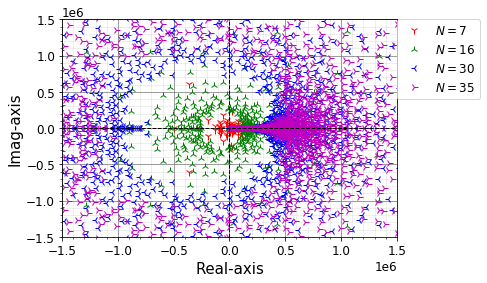}
\caption{Square domain}
\label{fig:nonuniform_square_mesh_withing_a_circle}
\end{subfigure}
\hfill
\begin{subfigure}[b]{0.45\textwidth}
\centering
\includegraphics[width=7cm]{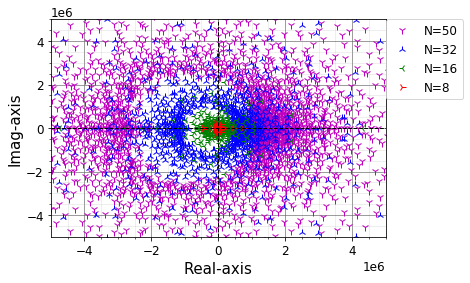}
\caption{L-shaped domain}
\label{fig:nonuniform_lshape_mesh_withing_a_circle}
\end{subfigure}
\caption{Eigenvalues within a circle of radius $R$ for Nonuniform mesh on a square and L-shaped domains}
\label{fig:nonuniform_mesh_withing_a_circle}
\end{figure}
An interesting observation, that matches the convergence of eigenvalues in 
Property~\ref{def:convergence_of_eigenvalues}, is clearly observed on Nonuniform meshes. 
To show this, we choose a Nonuniform mesh with $\lambda =10^8$ for both domains.
As seen in Figure \ref{fig:nonuniform_mesh_withing_a_circle}, when refining the mesh, 
eigenvalues are distributed allover the complex plane, 
within a circle of radius $R$ centered at the origin. 
The more we refine, the larger the circle becomes, indicating that the number of positive and finite
eigenvalues of interest are present inside that circle.
The circle can be taken larger and larger as the mesh size $h$ approaches zero.
%
\begin{figure}[t!]
\centering
\begin{subfigure}[b]{0.3\textwidth}
\includegraphics[width=\textwidth]{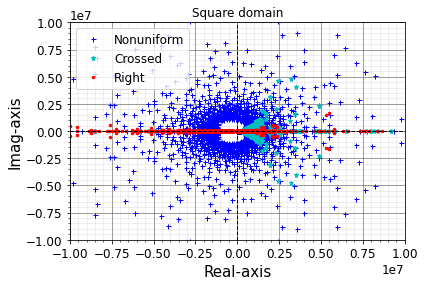}
\caption{Zoom out}
\label{fig:lam10pow8_square_allmeshes_outer_ZOOM}
\end{subfigure}
\hfill
\centering
\begin{subfigure}[b]{0.3\textwidth}
\centering
\includegraphics[width=\textwidth]{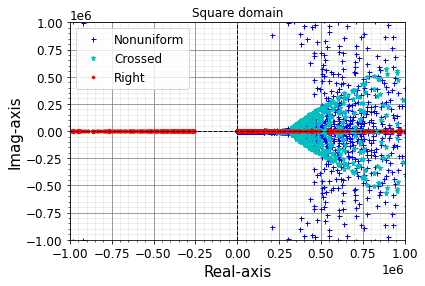}
\caption{Intermediate zoom}
\label{fig:lam10pow8_square_allmeshes_Intermediate_ZOOM}
\end{subfigure}
\hfill
\centering
\begin{subfigure}[b]{0.3\textwidth}
\centering
\includegraphics[width=\textwidth]{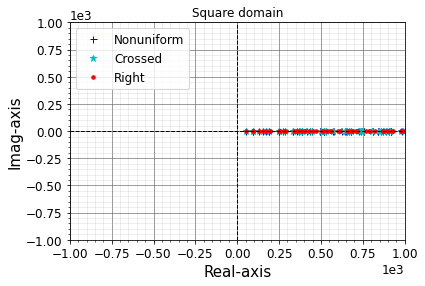}
\caption{Zoom in}
\label{fig:lam10pow8_square_allmeshes_ZOOMED}
\end{subfigure}
\hfill
\begin{subfigure}[b]{0.3\textwidth}
\centering
\includegraphics[width=\textwidth]{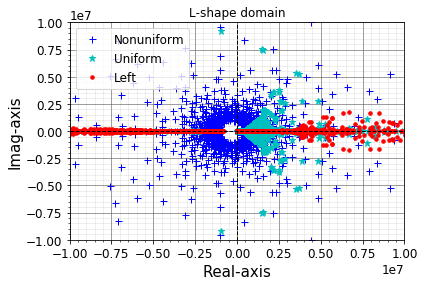}
\caption{Zoom out}
\label{fig:lam10pow8_lshape_allmeshes_outer_ZOOM}
\end{subfigure}
\hfill
\begin{subfigure}[b]{0.3\textwidth}
\centering
\includegraphics[width=\textwidth]{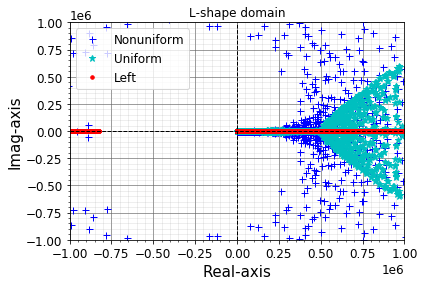}
\caption{Intermediate zoom}
\label{fig:lam10pow8_lshape_allmeshes_Intermediate_ZOOM}
\end{subfigure}
\hfill
\begin{subfigure}[b]{0.3\textwidth}
\centering
\includegraphics[width=\textwidth]{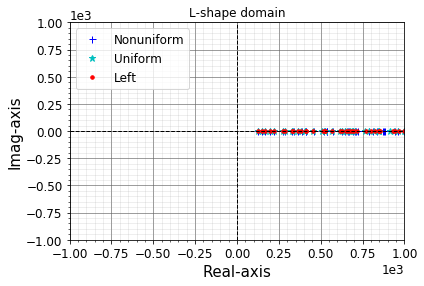}
\caption{Zoom in}
\label{fig:lam10pow8_lshape_allmeshes_ZOOMED}
\end{subfigure}
\caption{Eigenvalues for meshes zooming to region close to the origin}
\label{fig:Zoomed_fig_allMeshes}
\end{figure}

We conclude our numerical experiments with more pictures for different
values of the zoom parameters, so that the convergence behavior corresponding
to Property~\ref{def:convergence_of_eigenvalues} can be better appreciated.
Figure \ref{fig:Zoomed_fig_allMeshes} reports eigenvalues of both domains
with $\lambda=10^8$. The outer zoom lens in 
Figures~\ref{fig:lam10pow8_square_allmeshes_outer_ZOOM} 
and ~\ref{fig:lam10pow8_lshape_allmeshes_outer_ZOOM} gives a clear picture on the spread of eigenvalues in the complex plane. Looking at an intermediate zoom lens as~\ref{fig:lam10pow8_square_allmeshes_Intermediate_ZOOM} and
~\ref{fig:lam10pow8_lshape_allmeshes_Intermediate_ZOOM} show, the structure of eigenvalues for both domains is still not aliened with the region of interest as the spread is apparent. On the other hand, a zoomed in picture as Figure~\ref{fig:lam10pow8_square_allmeshes_ZOOMED} gives eigenvalues, on a square,  which are real and positive with respect to the previously reported pictures. Similar level of zoom show the same nice behavior in the case of the L-shaped domain
(see Figure~\ref{fig:lam10pow8_lshape_allmeshes_ZOOMED}). Thus, the scheme produces real and positive eigenvalues in the region of interest with no spurious modes as expected.

\section*{Acknowledgments}

FB gratefully acknowledges support by the Deutsche Forschungsgemeinschaft in
the Priority Program SPP 1748 \textit{Reliable simulation techniques in solid
mechanics, Development of non standard discretization methods, mechanical and
mathematical analysis} under the project number BE 6511/1-1.
DB is member of INdAM Research group GNCS and his research is partially
supported by PRIN/MIUR and by IMATI/CNR.


\begin{thebibliography}{99}
\bibitem{fenics} Aln\ae s, M., Blechta, J., Hake, J., Johansson, A., Kehlet, B., Logg, A., Richardson, C., Ring, J., Rognes, M.E. and Wells, G.N., 2015. The FEniCS project version 1.5. Archive of Numerical Software, 3(100).

\bibitem{LZ_FB_DB}  Alzaben, L., Bertrand, F. and Boffi, D., 2021, March. Computation of Eigenvalues in Linear Elasticity with Least-Squares Finite Elements: Dealing with the Mixed System. In 14th WCCM-ECCOMAS Congress 2020 (Vol. 700).

\bibitem{Babuska_eig_prob} Babu\v ska, I. and Osborn, J., 1991. Eigenvalue
problems. In Handbook of numerical analysis, Vol. II, pp.\ 641--787.

\bibitem{FB_eig} Bertrand, F. and Boffi, D., 2020. First order least-squares formulations for eigenvalue problems. IMA Journal of Applied Mathematics, to appear, arXiv preprint arXiv:2002.08145 [math.NA].

\bibitem{FB_LE} Bertrand, F. and Boffi, D., 2020. Least-squares formulations
for eigenvalue problems associated with linear elasticity. Comput. Math.
Appl., 95 (2021), pp.\ 19--27.

\bibitem{DPG} Bertrand, F., Boffi, D. and Schneider, H. DPG approximation of
eigenvalue problems. arXiv preprint arXiv:2012.06623 [math.NA].

\bibitem{acta_num} Boffi, D., 2010. Finite element approximation of eigenvalue
problems. Acta Numer., 19, pp.1--120.

\bibitem{Cai}  Cai, Z. and Starke, G., 2004. Least-squares methods for linear
elasticity. SIAM Journal on Numerical Analysis, 42(2), pp.826--842.

\bibitem{cc} Carstensen, C. and Storn, J., 2018. Asymptotic exactness of the
least-squares finite element residual. SIAM Journal on Numerical Analysis,
56(4), pp.\ 2008--2028.

\bibitem{brendan} Keith, B., 2021. A priori error analysis of high-order
LL$^*$ (FOSLL$^*$) finite element methods. Computers \&\ Mathematics with
Applications, 103, pp.\ 12--18.

\end{thebibliography}
\end{document}